\newcommand*{\mailto}[1]{\href{mailto:#1}{\nolinkurl{#1}}}
\newcommand{\bbN}{{\mathbb{N}}}
\newcommand{\bbR}{{\mathbb{R}}}
\newcommand{\bbC}{{\mathbb{C}}}
\newcommand{\dC}{{\mathbb{C}}}
\newcommand{\dR}{{\mathbb{R}}}
\newcommand{\cC}{{\mathcal C}}
\newcommand{\cD}{{\mathcal D}}
\newcommand{\cE}{{\mathcal E}}
\newcommand{\cG}{{\mathcal G}}
\newcommand{\cL}{{\mathcal L}}
\newcommand{\cM}{{\mathcal M}}
\newcommand{\sH}{{\mathfrak H}}
\newcommand{\sS}{{\mathfrak S}}
\newcommand{\lb}{\label}
\newcommand{\ol}{\overline}
\newcommand{\f}{\frac}
\def\senki{{\lbrack\negthinspace [\bot ]\negthinspace\rbrack}}
\def\senki+{{\lbrack\negthinspace [+] \negthinspace\rbrack}}
\renewcommand{\Im}{\mathop\mathrm{Im}}
\renewcommand{\ge}{\geqslant}
\DeclareMathOperator{\dom}{dom}
\DeclareMathOperator{\ran}{ran}
\DeclareMathOperator{\tr}{tr}
\allowdisplaybreaks \numberwithin{equation}{section}
\newtheorem{theorem}{Theorem}[section]
\newtheorem{lemma}[theorem]{Lemma}
\newtheorem{corollary}[theorem]{Corollary}
\newtheorem{definition}[theorem]{Definition}
\newtheorem{hypothesis}[theorem]{Hypothesis}
\theoremstyle{remark}
\begin{document}

\title[Spectral shift function for Schr\"{o}dinger operators]{A spectral shift function for Schr\"{o}dinger operators with singular interactions}

\author[J.\ Behrndt]{Jussi Behrndt}
\address{Institut f\"ur Numerische Mathematik, Technische Universit\"at
Graz, Steyrergasse 30, 8010 Graz, Austria}
\email{\mailto{behrndt@tugraz.at}}
\urladdr{\url{http://www.math.tugraz.at/~behrndt/}}

\author[F.\ Gesztesy]{Fritz Gesztesy} 
\address{Department of Mathematics
Baylor University, One Bear Place \#97328,
Waco, TX 76798-7328, USA}
\email{\mailto{Fritz\_Gesztesy@baylor.edu}}
\urladdr{\url{http://www.baylor.edu/math/index.php?id=935340}}

\author[S.\ Nakamura]{Shu Nakamura}
\address{Graduate School of Mathematical Sciences, University of Tokyo, 
3-8-1, Komaba, Meguro-ku, Tokyo, Japan 153-8914}
\email{\mailto{shu@ms.u-tokyo.ac.jp}}
\urladdr{\url{http://www.ms.u-tokyo.ac.jp/~shu/}}

\date{\today}



\begin{abstract}
For the pair $\{-\Delta, -\Delta-\alpha\delta_\cC\}$ of self-adjoint Schr\"{o}dinger operators in $L^2(\dR^n)$ a spectral shift function 
is determined in an explicit form with the help of (energy parameter 
dependent) Dirichlet-to-Neumann maps. Here $\delta_\cC$ denotes a singular $\delta$-potential which is supported on a smooth compact hypersurface $\cC\subset\dR^n$
and $\alpha$ is a real-valued function on $\cC$.
\end{abstract}

\maketitle


\section{Introduction}

The goal of this paper is to determine a spectral shift function for the pair $\{H,H_{\delta,\alpha}\}$, where $H=-\Delta$ is the usual self-adjoint Laplacian
in $L^2(\dR^n)$, and $H_{\delta,\alpha}=-\Delta-\alpha\delta_\cC$ is a singular perturbation of $H$ by a $\delta$-potential of variable real-valued 
strength $\alpha\in C^1(\cC)$ supported on some smooth,
compact hypersurface $\cC$ that splits $\dR^n$, $n\geq 2$, into a bounded interior and an unbounded exterior domain. 
Schr\"{o}dinger operators with $\delta$-interactions are often used as idealized models 
of physical systems with short-range potentials; in the simplest case point interactions are considered, but in the last decades also 
interactions supported on curves and hypersurfaces have attracted a lot of attention, see the monographs \cite{AGHH05,AK99,EK15}, the review \cite{E08}, and, for instance, 
\cite{AKMN13,AGS87,BLL13-AHP,BLL-Exner,BMN17,BEKS94,EI01,EK03,EK05,EY02,MPS15} for a small 
selection of papers in this area. 

It is known from \cite{BLL13-AHP} (see also \cite{BLL-Exner}) that for an integer $m>(n/2)-1$ the $m$-th powers of the resolvents of $H$ and $H_{\delta,\alpha}$ differ by a trace class operator,
\begin{equation}\label{resabmmm}
\big[(H_{\delta,\alpha} - z I_{L^2(\dR^n)})^{-m}-(H- z I_{L^2(\dR^n)})^{-m}\big]\in\sS_1(L^2(\dR^n)).
\end{equation}
Since both operators, $H$ and $H_{\delta,\alpha}$ are bounded from below, 
\cite[Theorem~8.9.1, p.~306--307]{Y92} applies (upon replacing the pair $(H, H_{\delta,\alpha})$ by $H + C I_{L^2(\bbR^n)}, 
H_{\delta,\alpha} + C I_{L^2(\bbR^n)})$ such that $H + C I_{L^2(\bbR^n)} \geq  I_{L^2(\bbR^n)}$ and 
$H_{\delta,\alpha} + C I_{L^2(\bbR^n)} \geq  I_{L^2(\bbR^n)}$ for some $C > 0$) and there exists a real-valued function 
$\xi\in L^1_{\rm loc}(\dR)$ satisfying 
\begin{equation*}
\int_\dR \f{\vert\xi(\lambda)\vert \, d\lambda}{(1+\vert \lambda \vert)^{m+1}}  < \infty,  
\end{equation*} 
such that the trace formula
\begin{equation*}
\tr_{L^2(\dR^n)}\bigl((H_{\delta,\alpha}- z I_{L^2(\dR^n)})^{-m}-(H- z I_{L^2(\dR^n)})^{-m}\bigr) 
= -m \int_\dR \frac{\xi(\lambda)\,d\lambda}{(\lambda - z)^{m+1}}
\end{equation*}
is valid for all $z\in\rho(H_{\delta,\alpha})\cap\rho(H)$.
The function $\xi$ in the integrand on the right-hand side is called a {\it spectral shift function} of the pair $\{H,H_{\delta,\alpha}\}$.
For more details, the history and developments of the spectral shift function we refer the reader to the 
survey papers 
\cite{BP98,BY92,BY92-1}, the standard monographs \cite{Y92,Y10}, the paper \cite{Y05}, and the original works \cite{L52,L56} by I.\,M. Lifshitz, \cite{K53,K62} by M.\,G.~Krein. 

Our approach in this note is based on techniques from extension theory of symmetric operators and relies on a recent representation result of the spectral shift function
in terms of an abstract Titchmarsh-Weyl $m$-function from \cite{BGN17}, which we recall in Section~\ref{ssfsec} for the convenience of the reader.
In our situation this abstract Titchmarsh--Weyl $m$-function will turn out to be a combination of energy dependent  Dirichlet-to-Neumann 
maps $\cD_{\rm i}(z)$ and $\cD_{\rm e}(z)$ associated to $-\Delta$ on the interior and exterior domain, respectively.
More precisely, we shall interpret $H$ and $H_{\delta,\alpha}$ as self-adjoint extensions of the densely defined closed symmetric operator 
\begin{equation*}
 Sf=-\Delta f\quad\dom(S)=\bigl\{f\in H^2(\dR^n) \, \big| \, f\!\upharpoonright_\cC=0\bigr\},
\end{equation*}
and make use of the
concept of so-called quasi boundary triples and their Weyl fucntions (see \cite{BL07,BL12}). 
It will then turn out in Theorem~\ref{dddthm2} that the trace class condition \eqref{resabmmm} 
is satisfied, and in the special case $\alpha(x)<0$, $x\in\cC$, the function 
\begin{align*}
& \xi(\lambda)    \\
& \quad =\sum_{j \in J}\lim_{\varepsilon\downarrow 0}\frac{1}{\pi} \Bigl(\big(\Im \big(\!\log \big(
\bigl(\overline{\cD_{\rm i}(\lambda+i\varepsilon)+\cD_{\rm e}(\lambda+i\varepsilon)}\bigr)^{-1} 
- \alpha^{-1}\big)\big)\big)\varphi_j,\varphi_j\Bigr)_{L^2(\cC)} 
\end{align*}
for a.e. $\lambda\in\dR$,
is a spectral shift function for the pair $\{H,H_{\delta,\alpha}\}$ such that $\xi(\lambda)=0$ for $\lambda < 0$; here $(\varphi_j)_{j \in J}$ 
is an orthonormal basis in $L^2(\cC)$.
For the case that no sign condition on the function $\alpha$ is assumed, a slightly more involved formula for the spectral shift function is provided 
in Theorem~\ref{dddthm} and in Corollary~\ref{mainthmcorchen3}.

Next, we briefly summarize the basic notation used in this paper: Let $\cG$, 
$\sH$, etc., be separable complex Hilbert spaces, $(\cdot,\cdot)_{\sH}$ the
scalar product in $\sH$ (linear in the first factor), and $I_{\sH}$ the identity operator
in $\sH$. If $T$ is a linear operator mapping (a subspace of\,) a
Hilbert space into another, $\dom(T)$ denotes the domain and $\ran(T)$ is the range 
of $T$. The closure
of a closable operator $S$ is denoted by $\ol S$. The spectrum and
resolvent set of a closed linear operator in $\sH$ will be denoted by
$\sigma(\cdot)$  and $\rho(\cdot)$, respectively.
The Banach spaces of bounded linear operators in $\sH$ are
denoted by $\cL(\sH)$; in the context of two
Hilbert spaces, $\sH_j$, $j=1,2$, we use the analogous abbreviation
$\cL(\sH_1, \sH_2)$. The $p$-th Schatten-von Neumann ideal consists of compact operators with singular values in $\ell^p$, $p>0$, 
and is denoted by $\sS_p(\sH)$ and $\sS_p(\sH_1,\sH_2)$.
For $\Omega \subseteq \bbR^n$ nonempty, $n \in \bbN$,  we suppress the 
$n$-dimensional Lebesgue measure $d^n x$ and use the shorthand notation 
$L^2(\Omega) := L^2(\Omega; d^n x)$; similarly, if $\partial \Omega$ is sufficiently regular we 
write $L^2(\partial \Omega) := L^2(\partial \Omega; d^{n-1} \sigma)$, with $d^{n-1} \sigma$ 
the surface measure on $\partial \Omega$. 
We also abbreviate $\bbC_{\pm} := \{z \in \bbC \, | \, \Im(z) \gtrless 0\}$ and 
$\bbN_0 = \bbN \cup \{0\}$.

\section{Quasi boundary triples and their Weyl functions}\label{section2}

In this preliminary section we briefly recall the concept of quasi boundary triples and their Weyl functions from
extension theory of symmetric operators, which will be used in the next sections.
We refer to \cite{BL07,BL12} for more details on quasi boundary triples and to \cite{BGP08,DM91,DM95,GG91,S12} for the closely related 
concepts of generalized and ordinary 
boundary triples. 

Throughout this section let $\sH$ be a separable Hilbert space and let $S$ be a densely defined closed symmetric operator in $\sH$. 

\begin{definition}\label{qbtdefinition}
Let $T\subset S^*$ be a linear operator in $\sH$ such that $\overline T=S^*$.
A triple $\{\cG,\Gamma_0,\Gamma_1\}$ is said to be a {\em quasi boundary triple}
for $T\subset S^*$ if $\cG$ is a Hilbert space and $\Gamma_0,\Gamma_1:\dom (T)\rightarrow\cG$
are linear mappings such that the following conditions $(i)$--$(iii)$ are satisfied:
\begin{itemize}
  \item [$(i)$] The abstract Green's identity
    \begin{equation*}
      (Tf,g)_\sH-(f,Tg)_\sH=(\Gamma_1 f,\Gamma_0 g)_\cG-(\Gamma_0 f,\Gamma_1 g)_\cG
    \end{equation*}
    holds for all $f,g\in\dom (T)$. 
  \item [$(ii)$] The range of the map $(\Gamma_0,\Gamma_1)^\top:\dom(T)\rightarrow\cG\times\cG$ is dense. 
  \item [$(iii)$] The operator $A_0:=T\upharpoonright\ker(\Gamma_0)$ is self-adjoint in $\sH$.
\end{itemize}
\end{definition}

The next theorem from \cite{BL07,BL12} 
contains a sufficient condition for a triple $\{\cG,\Gamma_0,\Gamma_1\}$ to be a quasi boundary triple. It will be used in the proof of Theorem~\ref{dddthm}.

\begin{theorem}\label{ratemal}
Let $\sH$ and $\cG$ be separable Hilbert spaces and let $T$ be a linear operator in $\sH$.
Assume that $\Gamma_0,\Gamma_1: \dom (T)\rightarrow\cG$ are linear mappings such
that the following conditions $(i)$--$(iii)$ hold:
\begin{itemize}
\item[$(i)$] The abstract Green's identity
\begin{equation*}
 (Tf,g)_\sH-(f,Tg)_\sH=(\Gamma_1 f,\Gamma_0 g)_\cG-(\Gamma_0 f,\Gamma_1 g)_\cG
\end{equation*}
holds for all $f,g\in\dom(T)$. 
\item [$(ii)$]
The range of 
$(\Gamma_0,\Gamma_1)^\top: \dom (T)\rightarrow\cG\times\cG$
is dense and $\ker(\Gamma_0)\cap\ker(\Gamma_1)$ is dense in $\sH$. 
\item[$(iii)$] $T\upharpoonright \ker(\Gamma_0)$ is an extension of a self-adjoint
operator $A_0$.
\end{itemize}
Then $$S:= T\upharpoonright\bigl(\ker(\Gamma_0)\cap\ker(\Gamma_1)\bigr)$$ 
is a densely defined closed
symmetric operator in $\sH$ such that $\overline T= S^*$ holds and $\{\cG,\Gamma_0,\Gamma_1\}$ is a
quasi boundary triple for $S^*$ with $A_0=T\upharpoonright \ker(\Gamma_0)$.
\end{theorem}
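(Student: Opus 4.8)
The plan is to prove the operator identity $T^* = S$ directly; once this is in hand, the closedness of $S$ and the relation $\overline{T} = S^*$ follow by soft arguments, and the quasi boundary triple axioms of Definition~\ref{qbtdefinition} can essentially be read off from the hypotheses. Two preliminary observations come first. Since $A_0 \subset T\upharpoonright\ker(\Gamma_0)$ and $A_0$ is self-adjoint (hence densely defined), the operator $T$ is densely defined, so that $T^*$ is meaningful. Applying Green's identity $(i)$ to $f, g \in \ker(\Gamma_0)$ shows that $T\upharpoonright\ker(\Gamma_0)$ is symmetric; being a symmetric densely defined extension of the self-adjoint operator $A_0$, it must coincide with $A_0$. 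Hence $A_0 = T\upharpoonright\ker(\Gamma_0)$ is self-adjoint.

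Next I would settle the easy inclusion $S \subset T^*$. The operator $S = T\upharpoonright\big(\ker(\Gamma_0)\cap\ker(\Gamma_1)\big)$ is densely defined by the second part of $(ii)$ and symmetric by Green's identity (all four boundary values vanish); moreover, for $u \in \dom(S)$ and arbitrary $f \in \dom(T)$, Green's identity gives $(Tf, u)_{\sH} = (f, Su)_{\sH}$, so $u \in \dom(T^*)$ with $T^* u = Su$, i.e. $S \subset T^*$.

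The crux is the reverse inclusion $T^* \subset S$, and the decisive point is the formal identity $T^* \subset A_0^* = A_0 = T\upharpoonright\ker(\Gamma_0)$, which follows from $A_0 \subset T$. Consequently every $v \in \dom(T^*)$ already satisfies $\Gamma_0 v = 0$ and $T^* v = Tv$. Feeding such a $v$ into Green's identity, for each $f \in \dom(T)$ one obtains $0 = (Tf, v)_{\sH} - (f, Tv)_{\sH} = (\Gamma_1 f, \Gamma_0 v)_{\cG} - (\Gamma_0 f, \Gamma_1 v)_{\cG} = -(\Gamma_0 f, \Gamma_1 v)_{\cG}$. Since $\ran(\Gamma_0)$ is dense in $\cG$ — it is the image of the dense set $\ran\big((\Gamma_0,\Gamma_1)^\top\big)$ under the continuous surjection onto the first coordinate — this forces $\Gamma_1 v = 0$, hence $v \in \ker(\Gamma_0)\cap\ker(\Gamma_1) = \dom(S)$ and $T^* v = Tv = Sv$. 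Thus $T^* \subset S$, and therefore $T^* = S$.

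It remains to assemble the conclusion. From $T^* = S$ the operator $S$ is closed (adjoints always are), and since $T^* = S$ is densely defined, $T$ is closable with $\overline{T} = T^{**} = (T^*)^* = S^*$; in particular $T \subset S^*$. So $S$ is a densely defined closed symmetric operator satisfying $\overline{T} = S^*$, and the triple $\{\cG,\Gamma_0,\Gamma_1\}$ meets conditions $(i)$–$(iii)$ of Definition~\ref{qbtdefinition}: $(i)$ and the density of $\ran\big((\Gamma_0,\Gamma_1)^\top\big)$ are assumed, while $(iii)$ is the self-adjointness of $A_0 = T\upharpoonright\ker(\Gamma_0)$ established above; hence $\{\cG,\Gamma_0,\Gamma_1\}$ is a quasi boundary triple for $S^*$ with $A_0 = T\upharpoonright\ker(\Gamma_0)$. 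I expect the main obstacle to be more psychological than technical: the key is to notice that $A_0 \subset T$ already pins down $T^*$ inside $\ker(\Gamma_0)$, after which Green's identity together with the density of $\ran(\Gamma_0)$ finishes the job; the only routine care required is with the adjoint and closability manipulations and with deducing density of $\ran(\Gamma_0)$ from density of the joint range.
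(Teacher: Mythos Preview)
Your proof is correct. The paper does not supply its own proof of this theorem; it merely cites \cite{BL07,BL12}, so there is nothing to compare against directly. That said, your argument is the standard one and matches the proof in those references: the key observation is that $A_0\subset T$ forces $T^*\subset A_0^*=A_0$, which combined with the first preliminary step ($A_0=T\upharpoonright\ker(\Gamma_0)$) places $\dom(T^*)$ inside $\ker(\Gamma_0)$; Green's identity and density of $\ran(\Gamma_0)$ then force $\Gamma_1 v=0$ as well, yielding $T^*=S$. The closure and adjoint bookkeeping at the end is handled correctly.
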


Next, we recall the definition of the $\gamma$-field $\gamma$ and Weyl function $M$ associated to a quasi boundary triple, which
is formally the same as in \cite{DM91,DM95} for the case of ordinary or generalized boundary triples. 
For this let $\{\cG,\Gamma_0,\Gamma_1\}$ be a quasi boundary triple for $T\subset S^*$ with $A_0=T\upharpoonright\ker(\Gamma_0)$.
We note that the direct sum decomposition
\begin{equation*}
  \dom (T) = \dom (A_0)\,\dot +\,\ker(T - z I_{\sH})
  = \ker(\Gamma_0)\,\dot +\,\ker(T- z I_{\sH})
\end{equation*}
of $\dom(T)$ holds for all $z \in \rho(A_0)$, and hence the mapping 
$\Gamma_0\upharpoonright \ker(T - z I_{\sH})$
is injective for all $z \in \rho(A_0)$ and its range coincides with $\ran(\Gamma_0)$.

\begin{definition}\label{gwdeffi}
Let $T\subset S^*$ be a linear operator in $\sH$ such that $\overline T=S^*$
and let $\{\cG,\Gamma_0,\Gamma_1\}$ be a quasi boundary triple for $T\subset S^*$ with $A_0=T\upharpoonright\ker(\Gamma_0)$.
The {\em $\gamma$-field} $\gamma$ and the {\em Weyl function} $M$ corresponding to $\{\cG,\Gamma_0,\Gamma_1\}$ are operator-valued functions on $\rho(A_0)$ which are defined by
\begin{equation*}
   z \mapsto\gamma(z):=\bigl(\Gamma_0\!\upharpoonright\ker(T - z I_{\sH})\bigr)^{-1}  \text{ and }   z \mapsto  
  M(z) := \Gamma_1\bigl(\Gamma_0\!\upharpoonright\ker(T - z I_{\sH})\bigr)^{-1}.
\end{equation*}
\end{definition}

Various useful properties of the $\gamma$-field and Weyl function associated to a quasi boundary triple 
were provided in \cite{BL07,BL12,BLL13-3}, see also \cite{BGP08,DM91,DM95,S12} for the special cases of ordinary and 
generalized boundary triples. In the following we only recall some properties important for our purposes. We first note that 
the values $\gamma(z)$, $z \in \rho(A_0)$, 
of the $\gamma$-field are operators defined on 
the dense subspace $\ran(\Gamma_0)\subset\cG$ which map onto
$\ker(T - z I_{\sH})\subset\sH$. The operators $\gamma(z)$, $z \in \rho(A_0)$, are bounded and admit continuous extensions $\overline{\gamma(z)}\in\cL(\cG,\sH)$,
the function $z\mapsto \overline{\gamma(z)}$ is analytic on $\rho(A_0)$, and one has
\begin{equation*}
 \frac{d^k}{dz^k}\overline{\gamma(z)}=k! \, (A_0 - z I_{\sH})^{-k}\overline{\gamma(z)},\qquad k \in \bbN_0,\,\,z \in \rho(A_0).
\end{equation*}
For the adjoint operators $\gamma(z)^*\in\cL(\sH,\cG)$, $z \in \rho(A_0)$, it follows from the abstract Green's identity in Definition~\ref{qbtdefinition}\,$(i)$ that  
\begin{equation}\label{gstar}
 \gamma(z)^*=\Gamma_1(A_0-{\ol z} I_{\sH})^{-1},\quad z \in \rho(A_0),
\end{equation}
and one has 
\begin{equation}\label{gammad2}
 \frac{d^k}{dz^k}\gamma({\ol z})^*=k! \, \gamma({\ol z})^*(A_0 - z I_{\sH})^{-k},\qquad k \in \bbN_0,\,\,z \in \rho(A_0).
\end{equation}

The values $M(z)$, $z \in \rho(A_0)$, of the Weyl
function $M$ associated to a quasi boundary triple are operators in $\cG$ with $\dom (M(z)) = \ran(\Gamma_0)$ and $\ran (M(z)) \subseteq \ran (\Gamma_1)$
for all $z \in \rho(A_0)$. In general, $M(z)$ may be an unbounded operator, which is not necessarily closed, but closable.
One can show that $ z \mapsto M(z)\varphi$ is holomorphic on $\rho(A_0)$ for all $\varphi\in\ran(\Gamma_0)$ and in the case, 
where the values $M(z)$ are densely defined bounded operators for some,
and hence for all $z \in \rho(A_0)$, one has 
\begin{equation}\label{gammad3}
 \frac{d^k}{dz^k}\overline{M(z)}=k! \, \gamma({\ol z})^*(A_0 - z I_{\sH})^{-(k-1)}\overline{\gamma(z)}, 
  \quad k \in \bbN, \; z \in \rho(A_0). 
\end{equation}

\section{A representation formula for  the spectral shift function}\label{ssfsec}

Let $A$ and $B$ be self-adjoint operators in a separable Hilbert space $\sH$ and assume that the closed symmetric operator $S=A\cap B$, that is,
\begin{equation}\label{jass}
 Sf=Af=Bf,\quad\dom(S) = \bigl\{f\in\dom(A)\cap\dom(B) \, | \, Af=Bf\bigr\},
\end{equation}
is densely defined.  
According to \cite[Proposition~2.4]{BGN17} there exists a quasi boundary triple $\{\cG,\Gamma_0,\Gamma_1\}$ 
with $\gamma$-field $\gamma$ and Weyl function $M$ such that
\begin{equation}\label{hoho2}
 A=T\upharpoonright\ker(\Gamma_0)\, \text{ and } \, B=T\upharpoonright\ker(\Gamma_1),
\end{equation}
and
\begin{equation}\label{resab}
 (B - z I_{\sH})^{-1}-(A - z I_{\sH})^{-1}=-\gamma(z) M(z)^{-1}\gamma({\ol z})^*,\quad z \in \rho(A)\cap\rho(B).
\end{equation}

Next we recall the main result in the abstract part of \cite{BGN17}, in which  
an explicit expression for a spectral shift function of the pair $\{A,B\}$ in terms of the 
Weyl function $M$ is found. We refer the reader to \cite[Section~4]{BGN17} for a detailed discussion and the proof of Theorem~\ref{mainssf2}.
We shall use the logarithm of a boundedly invertible dissipative operator in the formula for the spectral shift function below. Here we define for $K\in\cL(\cG)$ with 
$\Im(K) \geq 0$ and $0\subset\rho(K)$ the logarithm as
\begin{equation*}
 \log (K):=-i\int_0^\infty \bigl[(K+ i \lambda I_{\cG})^{-1}-(1+ i \lambda)^{-1}I_\cG\bigr] \, d\lambda;
\end{equation*}
cf. \cite[Section~2]{GMN99} for more details. We only mention that
$\log (K)\in\cL(\cG)$ by \cite[Lemma~2.6]{GMN99}.

\begin{theorem}\label{mainssf2}
Let $A$ and $B$ be self-adjoint operators in a separable Hilbert space $\sH$ and assume that for some $\zeta_0 \in\rho(A)\cap\rho(B)\cap\dR$ the sign condition
 \begin{equation}\label{sign333}
  (A-\zeta_0 I_{\sH})^{-1}\geq (B- \zeta_0 I_{\sH})^{-1}
 \end{equation}
 holds. Let the closed symmetric operator $S=A\cap B$ in \eqref{jass} be densely defined and 
 let $\{\cG,\Gamma_0,\Gamma_1\}$ be a quasi boundary triple with $\gamma$-field $\gamma$ and Weyl function $M$ 
such that
\eqref{hoho2}, and hence also \eqref{resab}, hold. Assume that $M(z_1)$, $M(z_2)^{-1}$ are 
bounded $($not necessarily everywhere defined\,$)$ 
operators in $\cG$ for some $z_1, z_2 \in \rho(A)\cap\rho(B)$
and that for some $k \in \bbN_0$, all $p,q \in \bbN_0$, and all $z \in \rho(A)\cap\rho(B)$, 
\begin{equation*}
 \left(\frac{d^p}{d z^p}\overline{\gamma(z)}\right)\frac{d^q}{dz^q}\bigl( M(z)^{-1} \gamma({\ol z})^*\bigr)\in\sS_1(\sH),\quad p+q=2k,
\end{equation*}
\begin{equation*}
 \left(\frac{d^q}{dz^q}\bigl( M(z)^{-1} \gamma({\ol z})^*\bigr)\right)\frac{d^p}{dz^p}\overline{\gamma(z)}\in\sS_1(\cG),\quad p+q=2k,
\end{equation*}
and 
 \begin{equation*}
   \frac{d^j}{dz^j} \overline{M (z)}\in\sS_{(2k+1)/j}(\cG),\quad j=1,\dots,2k+1.
 \end{equation*}
Then the following assertions $(i)$ and $(ii)$ hold:
 \begin{itemize}
  \item [$(i)$] The difference of the $2k+1$th-powers of the resolvents of $A$ and $B$ is 
a trace class operator, that is,
\begin{equation*}
 \big[(B - z I_{\sH})^{-(2k+1)}-(A - z I_{\sH})^{-(2k+1)}\big] \in \sS_1(\sH)
\end{equation*}
holds for all $z\in\rho(A)\cap\rho(B)$. 
 \item [$(ii)$] For any orthonormal basis $\{\varphi_j\}_{j \in J}$ in $\cG$ the function 
  \begin{equation*}
   \xi(\lambda)=
   \sum_{j \in J} \lim_{\varepsilon\downarrow 0}\frac{1}{\pi}\bigl(\Im\big(\log\big(\overline{M(\lambda+i\varepsilon)}\big)\big)\varphi_j,\varphi_j\bigr)_\cG
   \, \text{ for a.e.~$\lambda \in \dR$}, 
  \end{equation*}
is a spectral shift function for the pair $\{A,B\}$ such that $\xi(\lambda)=0$ in an open neighborhood 
of $\zeta_0$; the function $\xi$ does not depend on the choice of the orthonormal basis $(\varphi_j)_{j \in J}$. In particular, the trace formula
\begin{equation*}
\begin{split}
& \tr_{\sH}\bigl( (B - z I_{\sH})^{-(2k+1)}-(A - z I_{\sH})^{-(2k+1)}\bigr)\\ 
 & \quad = - (2k+1) \int_\dR \frac{\xi(\lambda)\, d\lambda}{(\lambda - z)^{2k+2}}, 
 \quad z \in \rho(A)\cap\rho(B), 
\end{split} 
\end{equation*}
holds.
\end{itemize}
\end{theorem}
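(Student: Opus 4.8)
The plan is to extract everything from the resolvent identity \eqref{resab}, the differentiation rules \eqref{gammad2}--\eqref{gammad3}, and the fact that the Weyl function $M$ is an operator Nevanlinna function whose logarithm is controlled by \cite{GMN99}. Applying the abstract Green identity of Definition~\ref{qbtdefinition}\,$(i)$ to $f=g=\overline{\gamma(z)}\varphi\in\ker(T-zI_\sH)$ gives
\begin{equation*}
\Im\big((M(z)\varphi,\varphi)_\cG\big)=\Im(z)\,\big\|\overline{\gamma(z)}\varphi\big\|_\sH^2\ge 0,\qquad z\in\bbC_+,\ \varphi\in\ran(\Gamma_0),
\end{equation*}
so $M$ is a (strong) $\cL(\cG)$-valued Nevanlinna function; by the boundedness hypotheses on $M(z_1)$, $M(z_2)^{-1}$ and the standard propagation of boundedness and bounded invertibility of Weyl functions over $\rho(A)=\rho(A_0)$, one has $\overline{M(z)}\in\cL(\cG)$ and $0\in\rho\big(\overline{M(z)}\big)$ for every $z\in\rho(A)\cap\rho(B)$, and $-\overline{M(z)}^{-1}$ is Nevanlinna as well. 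Consequently, for $z\in\bbC_+$ the operator $\log\big(\overline{M(z)}\big)\in\cL(\cG)$ is well defined by the integral formula recalled before the theorem, $z\mapsto\log\big(\overline{M(z)}\big)$ is again an $\cL(\cG)$-valued Nevanlinna function, and $0\le\Im\big(\log(\overline{M(z)})\big)\le\pi I_\cG$ (cf.\ \cite{GMN99}).

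Next I would prove $(i)$ and reduce the trace formula to a statement about $\log\big(\overline{M(z)}\big)$. From \eqref{resab},
\begin{equation*}
(B-zI_\sH)^{-(2k+1)}-(A-zI_\sH)^{-(2k+1)}=-\frac{1}{(2k)!}\,\frac{d^{2k}}{dz^{2k}}\Big(\overline{\gamma(z)}\,M(z)^{-1}\,\gamma(\overline z)^*\Big),
\end{equation*}
and expanding by the Leibniz rule every summand equals $\big(\tfrac{d^p}{dz^p}\overline{\gamma(z)}\big)\,\tfrac{d^q}{dz^q}\big(M(z)^{-1}\gamma(\overline z)^*\big)$ with $p+q=2k$, which is trace class by the first Schatten hypothesis; this gives $(i)$. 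The second Schatten hypothesis makes the same products, with the two factors interchanged, trace class in $\cG$, so by cyclicity of the trace the $\sH$-trace may be transported to $\cG$. The technical core is the identity
\begin{equation*}
\tr_\sH\big((B-zI_\sH)^{-(2k+1)}-(A-zI_\sH)^{-(2k+1)}\big)=-\frac{1}{(2k)!}\,\tr_\cG\Big(\frac{d^{2k+1}}{dz^{2k+1}}\log\big(\overline{M(z)}\big)\Big).
\end{equation*}
For $k=0$ this follows from the special case $\gamma(\overline z)^*\overline{\gamma(z)}=\tfrac{d}{dz}\overline{M(z)}$ of \eqref{gammad3}, from \eqref{resab}, and from cyclicity, since then $\tr_\sH\big((B-zI_\sH)^{-1}-(A-zI_\sH)^{-1}\big)=-\tr_\cG\big(\overline{M(z)}^{-1}\tfrac{d}{dz}\overline{M(z)}\big)=-\tr_\cG\big(\tfrac{d}{dz}\log(\overline{M(z)})\big)$; the general case is obtained by differentiating $2k$ further times, which is legitimate once one knows that $\tfrac{d^{2k+1}}{dz^{2k+1}}\log\big(\overline{M(z)}\big)\in\sS_1(\cG)$. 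This last fact is precisely where the hypotheses $\tfrac{d^j}{dz^j}\overline{M(z)}\in\sS_{(2k+1)/j}(\cG)$, $j=1,\dots,2k+1$, are used: a Fa\`a di Bruno expansion writes $\tfrac{d^{2k+1}}{dz^{2k+1}}\log(\overline{M(z)})$ as a finite sum of products of factors $\overline{M(z)}^{-1}$ and derivatives $\tfrac{d^{j_i}}{dz^{j_i}}\overline{M(z)}$ with $j_1+\dots+j_r=2k+1$, and H\"older's inequality for Schatten norms (with exponents $q_i=(2k+1)/j_i$, $\sum_i 1/q_i=1$) places every such product in $\sS_1(\cG)$.

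Finally I would read off $\xi$ and conclude. Since $\Im\big(\log(\overline{M(z)})\big)$ is nonnegative, bounded by $\pi I_\cG$, and Nevanlinna in $z$, the operator-valued measure in its Herglotz representation is purely absolutely continuous with density bounded by $I_\cG$; hence the weak boundary values $\Xi(\lambda):=\Im\big(\log(\overline{M(\lambda+i0)})\big)\ge 0$ exist for a.e.\ $\lambda\in\dR$, and setting $\xi(\lambda):=\tfrac{1}{\pi}\tr_\cG\big(\Xi(\lambda)\big)$ recovers the series in the statement (the termwise limits exist a.e.\ and sum to $\tr_\cG(\Xi(\lambda))$), which is independent of the orthonormal basis because $\Xi(\lambda)\ge0$. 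Differentiating the scalar Herglotz representations of $z\mapsto\big(\log(\overline{M(z)})\varphi,\varphi\big)_\cG$ exactly $2k+1$ times (the constant and linear terms drop out) and summing over the basis yields
\begin{equation*}
\tr_\cG\Big(\frac{d^{2k+1}}{dz^{2k+1}}\log\big(\overline{M(z)}\big)\Big)=(2k+1)!\int_\dR\frac{\xi(\lambda)\,d\lambda}{(\lambda-z)^{2k+2}},
\end{equation*}
and combining with the identity from the previous step (using $(2k+1)!/(2k)!=2k+1$) gives the trace formula in $(ii)$; in particular $\xi\in L^1_{\mathrm{loc}}(\dR)$ with the integrability weight $(1+|\lambda|)^{-(2k+2)}$. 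For the normalization, the sign condition \eqref{sign333} at $\zeta_0\in\dR$ becomes, via \eqref{resab}, $\overline{\gamma(\zeta_0)}\,\overline{M(\zeta_0)}^{-1}\,\gamma(\zeta_0)^*\ge 0$; since $\overline{M(\zeta_0)}$ is self-adjoint and $\ran(\gamma(\zeta_0)^*)$ is dense in $\cG$, this forces $\overline{M(\zeta_0)}\ge 0$, hence $\overline{M(\lambda)}$ is a nonnegative boundedly invertible self-adjoint operator, and $\log(\overline{M(\lambda)})$ is self-adjoint, for all real $\lambda$ near $\zeta_0$; therefore $\Im\big(\log(\overline{M(\lambda+i0)})\big)=0$ and $\xi(\lambda)=0$ in a neighborhood of $\zeta_0$.

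The step I expect to be the main obstacle is the identity $\tr_\sH(\,\cdot\,)=-\tfrac{1}{(2k)!}\tr_\cG\big(\tfrac{d^{2k+1}}{dz^{2k+1}}\log(\overline{M(z)})\big)$ for $k\ge 1$: here none of $\overline{M(z)}$, $\tfrac{d}{dz}\overline{M(z)}$, $\log(\overline{M(z)})$ need be trace class (only the $(2k+1)$-st derivative of the logarithm is), so the Leibniz and Fa\`a di Bruno bookkeeping together with the Schatten--H\"older estimates must be carried out carefully, and every operator identity has to be justified first on the dense subspace $\ran(\Gamma_0)$ (since $M(z)$ is only closable and not everywhere defined) before passing to closures. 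A secondary, purely analytic difficulty is the boundary behaviour as $\varepsilon\downarrow 0$: establishing that the pointwise limits in the definition of $\xi(\lambda)$ exist for a.e.\ $\lambda$, that termwise passage to the limit inside the sum over $j\in J$ is legitimate, and that the resulting density is exactly the one entering the Herglotz representation used above.
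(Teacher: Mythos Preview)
The paper itself does not prove Theorem~\ref{mainssf2}; it recalls the statement from \cite{BGN17} and explicitly refers to \cite[Section~4]{BGN17} for the proof. So there is no in-paper argument to compare against, and your sketch is a plausible reconstruction of the route taken there: differentiate the Krein-type resolvent formula \eqref{resab}, transport traces to $\cG$ by cyclicity, identify the result with the $(2k+1)$-st derivative of $\log\overline{M(z)}$, and extract $\xi$ from the Herglotz representation of the operator logarithm.

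The step you flag as the main obstacle is a genuine gap as written. For $k\ge1$ you cannot ``obtain the general case by differentiating $2k$ further times'': the scalar identity $\tr_\sH\big((B-zI_\sH)^{-1}-(A-zI_\sH)^{-1}\big)=-\tr_\cG\big((\log\overline{M})'(z)\big)$ is not available when $k\ge1$ (neither side is defined), so there is nothing to differentiate. What does go through is to stay at the operator level. After cycling, your Leibniz sum $\sum_{p+q=2k}\binom{2k}{p}\bigl(\overline{M}^{-1}\gamma(\overline z)^*\bigr)^{(q)}\overline{\gamma(z)}^{(p)}$ is exactly the Leibniz expansion of $\bigl(\overline{M}^{-1}\gamma(\overline z)^*\overline{\gamma(z)}\bigr)^{(2k)}=\bigl(\overline{M}^{-1}\overline{M}'\bigr)^{(2k)}$ (using $\gamma(\overline z)^*\overline{\gamma(z)}=\overline{M}'(z)$), so one gets directly
\[
(2k)!\,\tr_\sH\bigl[(B-zI_\sH)^{-(2k+1)}-(A-zI_\sH)^{-(2k+1)}\bigr]
=-\tr_\cG\Bigl(\bigl(\overline{M}^{-1}\overline{M}'\bigr)^{(2k)}(z)\Bigr).
\]
It then remains to prove $\tr_\cG\bigl((\overline{M}^{-1}\overline{M}')^{(2k)}\bigr)=\tr_\cG\bigl((\log\overline{M})^{(2k+1)}\bigr)$. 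Since $(\log\overline{M})'\ne\overline{M}^{-1}\overline{M}'$ as operators in the noncommutative setting, this is not automatic; one has to show directly, using the integral formula for the logarithm together with the Schatten--H\"older hypotheses on $\overline{M}^{(j)}$, that the $(2k)$-th derivative of the difference is a sum of trace-class commutators with vanishing trace. This is the honest content of the ``bookkeeping'' you allude to, and it cannot be bypassed by a formal differentiation argument.

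One smaller point on the normalization: your inference $\overline{M(\zeta_0)}\ge0$ relies on $\ran(\gamma(\zeta_0)^*)$ being dense in $\cG$. For quasi boundary triples this is not automatic, since $\gamma(\zeta_0)^*=\Gamma_1(A-\zeta_0 I_\sH)^{-1}$ has range $\Gamma_1(\ker\Gamma_0)$, which in general is only contained in $\ran(\Gamma_1)$; you will need either an additional argument for this density or a different route to the sign of $\overline{M(\zeta_0)}$.
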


In the special case $k=0$ Theorem~\ref{mainssf2} can be reformulated and slightly improved; cf. \cite[Corollary~4.2]{BGN17}. Here the essential feature is 
that the limit $\Im(\log(\overline {M(\lambda+i0)}))$ exists in $\sS_1(\cG)$ 
for a.e. $\lambda\in\dR$.

\begin{corollary}\label{mainthmcorchen}
Let $A$ and $B$ be self-adjoint operators in a separable Hilbert space $\sH$ and assume that for some $\zeta_0 \in \rho(A)\cap\rho(B)\cap\dR$ the sign condition
 \begin{equation*}
  (A-\zeta_0 I_{\sH})^{-1}\geq (B-\zeta_0 I_{\sH})^{-1}
 \end{equation*}
 holds. Assume that the closed symmetric operator $S=A\cap B$ in \eqref{jass} is densely defined and 
 let $\{\cG,\Gamma_0,\Gamma_1\}$ be a quasi boundary triple with $\gamma$-field $\gamma$ and Weyl function $M$ such that \eqref{hoho2}, and hence also \eqref{resab}, hold. Assume that $M(z_1)$, 
$M(z_2)^{-1}$ are bounded $($not necessarily everywhere defined\,$)$ 
operators in $\cG$ for some $z_1,z_2\in\rho(A)$ and that $\overline{\gamma(z_0)}\in\sS_2(\cG,\sH)$ for some $z_0\in\rho(A)$. 
Then the following assertions $(i)$--$(iii)$ hold: 
\begin{itemize}
  \item [$(i)$]
The difference of the resolvents of $A$ and $B$ is 
a trace class operator, that is,
\begin{equation*}
 \big[(B - z I_{\sH})^{-1}-(A - z I_{\sH})^{-1}\big] \in \sS_1(\sH)
\end{equation*}
holds for all $z\in\rho(A)\cap\rho(B)$. 
  \item [$(ii)$] $\Im(\log (\overline{M(z)}))\in\sS_1(\cG)$ for all $z\in\dC\backslash\dR$ and the limit 
  $$\Im\big(\log\big(\overline{M(\lambda+i 0)}\big)\big):=\lim_{\varepsilon\downarrow 0}\Im\big(\log\big(\overline{M(\lambda+i\varepsilon)}\big)\big)$$ 
  exists for a.e.~$\lambda \in \dR$ in $\sS_1(\cG)$. 
  \item [$(iii)$] The function
  \begin{equation*}
   \xi(\lambda)=\frac{1}{\pi} \tr_{\cG}\bigl(\Im\big(\log\big(\overline{M(\lambda + i0)}\big)\big)\bigr) \, \text{ for a.e.~$\lambda \in \dR$}, 
  \end{equation*}
is a spectral shift function for the pair $\{A,B\}$ such that $\xi(\lambda)=0$ in an open neighborhood 
of $\zeta_0$ and the trace formula
\begin{equation*}
 \tr_{\sH}\bigl( (B - z I_{\sH})^{-1}-(A - z I_{\sH})^{-1}\bigr) 
 = -  \int_\dR \frac{\xi(\lambda)\, d\lambda}{(\lambda - z)^{2}} 
\end{equation*}
is valid for all $z \in \rho(A)\cap\rho(B)$.
\end{itemize}
\end{corollary}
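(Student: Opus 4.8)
The plan is to deduce the statement from Theorem~\ref{mainssf2} specialized to $k=0$, and then to supply the two pieces of information that go beyond that theorem: that $\Im(\log(\ol{M(z)}))$ is in fact trace class, and that its boundary values at the real axis exist not merely weakly (as in Theorem~\ref{mainssf2}$(ii)$) but in the trace norm, which is exactly what converts the bilinear sum defining $\xi$ into the trace appearing in~$(iii)$.

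\emph{Step 1: propagating the single-point assumptions and invoking Theorem~\ref{mainssf2}.} Starting from $\ol{\gamma(z_0)}\in\sS_2(\cG,\sH)$ for one $z_0\in\rho(A)$, the identity $\gamma(z)=(I_\sH+(z-z_0)(A-z I_\sH)^{-1})\gamma(z_0)$, valid for $z\in\rho(A)$, together with the ideal property of $\sS_2$, gives $\ol{\gamma(z)}\in\sS_2(\cG,\sH)$, and hence $\gamma(\ol z)^*=(\ol{\gamma(\ol z)})^*\in\sS_2(\sH,\cG)$, for every $z\in\rho(A)$. Therefore $\ol{\gamma(z)}M(z)^{-1}\gamma(\ol z)^*\in\sS_1(\sH)$ and $M(z)^{-1}\gamma(\ol z)^*\ol{\gamma(z)}\in\sS_1(\cG)$ (a product of two Hilbert--Schmidt operators with a bounded operator is trace class), while \eqref{gammad3} with $k=1$ gives $\tfrac{d}{dz}\ol{M(z)}=\gamma(\ol z)^*\ol{\gamma(z)}\in\sS_1(\cG)$. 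Boundedness of $\ol{M(z)}$ and of $M(z)^{-1}$ on all of $\rho(A)\cap\rho(B)$, once they are bounded at one point of $\rho(A)$, is a standard feature of the $\gamma$-field and Weyl-function calculus for quasi boundary triples obeying \eqref{hoho2}, together with the Krein-type equivalence $0\in\rho(\ol{M(z)})\Leftrightarrow z\in\rho(B)$ for $z\in\rho(A)$; see \cite{BL12,BGN17}. Thus every hypothesis of Theorem~\ref{mainssf2} with $k=0$ holds, which already proves assertion~$(i)$ and provides a spectral shift function $\xi(\la)=\sum_{j\in J}\lim_{\eps\downarrow0}\tfrac1\pi(\Im(\log(\ol{M(\la+i\eps)}))\varphi_j,\varphi_j)_\cG$ for a.e.\ $\la\in\dR$ that vanishes near $\zeta_0$ and obeys the trace formula in~$(iii)$.

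\emph{Step 2: the trace-norm refinement of~$(ii)$.} For $z\in\bbC_+\subset\rho(A)\cap\rho(B)$ the operator $\ol{M(z)}\in\cL(\cG)$ is boundedly invertible and dissipative, and $\Im(\ol{M(z)})=\Im(z)(\ol{\gamma(z)})^*\ol{\gamma(z)}\in\sS_1(\cG)$ since $\ol{\gamma(z)}\in\sS_2(\cG,\sH)$. Moreover, by Step~1 the difference $\ol{M(z)}-\ol{M(\zeta_0)}=(z-\zeta_0)(\ol{\gamma(\zeta_0)})^*(I_\sH+(z-\zeta_0)(A-z I_\sH)^{-1})\ol{\gamma(\zeta_0)}$ lies in $\sS_1(\cG)$, while the sign condition \eqref{sign333} forces, via \eqref{resab}, $\ol{M(\zeta_0)}\ge0$, so that $\Im(\log(\ol{M(\zeta_0)}))=0$. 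Combining these facts with the stability of the logarithm under trace-class perturbations of a boundedly invertible operator with nonnegative imaginary part (cf. \cite{GMN99}) shows $\Im(\log(\ol{M(z)}))\in\sS_1(\cG)$ for all $z\in\dC\backslash\dR$. Since in addition $0\le\Im(\log(\ol{M(z)}))\le\pi I_\cG$ for $z\in\bbC_+$, the map $z\mapsto\log(\ol{M(z)})$ is a bounded operator Nevanlinna function with trace-class imaginary part; the representing measure in its integral representation is then trace-class-valued, its Lebesgue density exists in the trace norm for a.e.\ $\la\in\dR$, and equals $\tfrac1\pi\Im(\log(\ol{M(\la+i0)}))$, which yields the a.e.\ $\sS_1$-limit asserted in~$(ii)$. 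This boundary-value (Fatou-type) statement for operator Nevanlinna functions with trace-class imaginary part is the technical heart of the argument; see \cite{BGN17} and the references therein.

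\emph{Step 3: from the bilinear sum to the trace.} With the $\sS_1$-norm boundary limit in hand, continuity of $\tr_\cG$ on $\sS_1(\cG)$ allows the interchange of summation and limit in Step~1, so that for a.e.\ $\la\in\dR$,
\begin{align*}
 &\sum_{j\in J}\lim_{\eps\downarrow0}(\Im(\log(\ol{M(\la+i\eps)}))\varphi_j,\varphi_j)_\cG \\
 &\qquad=\lim_{\eps\downarrow0}\tr_\cG(\Im(\log(\ol{M(\la+i\eps)})))=\tr_\cG(\Im(\log(\ol{M(\la+i0)}))),
\end{align*}
and hence the spectral shift function obtained from Theorem~\ref{mainssf2} coincides a.e.\ with $\tfrac1\pi\tr_\cG(\Im(\log(\ol{M(\la+i0)})))$; the vanishing near $\zeta_0$ and the trace formula in~$(iii)$ are inherited directly from Theorem~\ref{mainssf2}$(ii)$. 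The only genuinely delicate point in this programme is the passage from the weak boundary limit to a trace-norm limit for a.e.\ $\la$; everything else is Schatten-ideal bookkeeping within the $\gamma$-field and Weyl-function calculus recalled in Section~\ref{section2}.
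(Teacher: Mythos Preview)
The paper does not actually prove this corollary: it only cites \cite[Corollary~4.2]{BGN17} and remarks that the essential feature beyond Theorem~\ref{mainssf2} with $k=0$ is the existence of the $\sS_1(\cG)$-limit $\Im(\log(\overline{M(\lambda+i0)}))$. So there is no in-paper argument to compare against.

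Your sketch reconstructs precisely the line of reasoning one expects (and that \cite{BGN17} carries out): propagate $\overline{\gamma(z)}\in\sS_2$ from one point to all of $\rho(A)$, verify the $k=0$ hypotheses of Theorem~\ref{mainssf2}, then upgrade the weak boundary limit to an $\sS_1$-limit by showing $\Im(\log(\overline{M(z)}))\in\sS_1(\cG)$ via the trace-class perturbation $\overline{M(z)}-\overline{M(\zeta_0)}$ of the nonnegative operator $\overline{M(\zeta_0)}$, and finally invoke the Fatou theory for operator Nevanlinna functions with trace-class imaginary part. The identification of the delicate point (the a.e.\ trace-norm boundary limit) is accurate, and the remaining steps are indeed routine $\sS_p$-bookkeeping. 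One small comment: the inference $\overline{M(\zeta_0)}\ge0$ from \eqref{sign333} and \eqref{resab} uses that $\gamma(\zeta_0)^*$ has dense range (equivalently, $\overline{\gamma(\zeta_0)}$ is injective), which is worth stating explicitly; otherwise the argument is sound.
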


We also recall from \cite[Section~4]{BGN17} how 
the sign condition \eqref{sign333} in the assumptions in Theorem~\ref{mainssf2} can be replaced by some weaker comparability condition,
which is satisfied in our main application in the next section.
Again, let $A$ and $B$ be self-adjoint operators in a separable Hilbert space $\sH$ and assume that there exists a self-adjoint operator $C$ in $\sH$ such that
\begin{equation*}
 (C-\zeta_A I_{\sH})^{-1}\geq (A-\zeta_A I_{\sH})^{-1}\, \text{ and } \,  
 (C-\zeta_B I_{\sH})^{-1}\geq (B-\zeta_B I_{\sH})^{-1}
\end{equation*}
for some $\zeta_A\in\rho(A)\cap\rho(C)\cap\dR$ and some $\zeta_B\in\rho(B)\cap\rho(C)\cap\dR$, respectively. Assume that the closed symmetric
operators $S_A=A\cap C$ and $S_B=B\cap C$ are both densely defined and choose quasi boundary triples $\{\cG_A,\Gamma_0^A,\Gamma_1^A\}$ and  
$\{\cG_B,\Gamma_0^B,\Gamma_1^B\}$ with $\gamma$-fields $\gamma_A,\gamma_B$ and Weyl functions $M_A$, $M_B$ for
\begin{equation*}
 T_A=S_A^*\upharpoonright\bigl(\dom (A)+\dom(C)\bigr)\, \text{ and } \, T_B=S_B^*\upharpoonright\bigl(\dom (B)+\dom(C)\bigr)
\end{equation*}
such
that
\begin{equation*}
 C=T_A\upharpoonright\ker(\Gamma_0^A)=T_B\upharpoonright\ker(\Gamma_0^B),
\end{equation*}
and
\begin{equation*}
  A=T_A\upharpoonright\ker(\Gamma_1^A)\, \text{ and } \, B=T_B\upharpoonright\ker(\Gamma_1^B),
\end{equation*}
(cf.\ \cite[Proposition~2.4]{BGN17}). Next, assume that for some $k \in \bbN_0$, 
the conditions in Theorem~\ref{mainssf2} are satisfied for the $\gamma$-fields $\gamma_A,\gamma_B$ and the Weyl functions $M_A$, $M_B$.
Then the difference of the $2k+1$-th powers of the resolvents of $A$ and $C$, and the difference of the $2k+1$-th powers of the resolvents of $B$ and $C$ are trace class operators,
and for orthonormal bases $(\varphi_j)_{j \in J}$ in $\cG_A$ and $(\psi_{\ell})_{\ell \in L}$ in $\cG_B$ ($J, L \subseteq \bbN$ appropriate index sets), 
\begin{equation*}
   \xi_A(\lambda)=\sum_{j \in J}\lim_{\varepsilon\downarrow 0}
   \frac{1}{\pi}\bigl(\Im\big(\log\big(\overline{M_A(\lambda+i\varepsilon)}\big)\big)\varphi_j,\varphi_j\bigr)_{\cG_A}\, \text{ for a.e.~$\lambda \in \dR$,}
  \end{equation*}
and 
\begin{equation*}
   \xi_B(\lambda)=\sum_{\ell \in L}\lim_{\varepsilon\downarrow 0}
   \frac{1}{\pi}\bigl(\Im\big(\log\big(\overline{M_B(\lambda+i\varepsilon)}\big)\big)
   \psi_{\ell},\psi_{\ell}\bigr)_{\cG_B}\, \text{ for a.e.~$\lambda \in \dR$,}
  \end{equation*}
are spectral shift functions for the pairs $\{C,A\}$ and $\{C,B\}$, respectively. It follows for $z \in \rho(A)\cap\rho(B)\cap\rho(C)$ 
that
\begin{equation*}
\begin{split}
 &\tr_{\sH}\bigl( (B - z I_{\sH})^{-(2k+1)}-(A - z I_{\sH})^{-(2k+1)}\bigr)\\
 &\quad= \tr_{\sH}\bigl( (B - z I_{\sH})^{-(2k+1)}-(C - z I_{\sH})^{-(2k+1)}\bigr)  
 \\ 
 & \qquad -\tr_{\sH}\bigl( (A - z I_{\sH})^{-(2k+1)}-(C - z I_{\sH})^{-(2k+1)}\bigr) \\
 &\quad = - (2k+1) \int_\dR \frac{[\xi_B(\lambda) - \xi_A(\lambda)] \, d\lambda }{(\lambda - z)^{2k+2}}
 \end{split}
 \end{equation*}
 and
 \begin{equation*}
  \int_\dR \frac{\vert \xi_B(\lambda)-\xi_A(\lambda)\vert \, d\lambda}{(1+\vert \lambda \vert)^{2m+2}} < \infty.
 \end{equation*}
  Therefore, 
\begin{equation} 
    \xi(\lambda) = \xi_B(\lambda)-\xi_A(\lambda)  \, \text{ for a.e.~$\lambda \in \dR$,}   \label{ssfab}
\end{equation}
 is a spectral shift function for the pair $\{A,B\}$, and 
  in the special case where $\cG_A = \cG_B := \cG$ and $(\varphi_j)_{j \in J}$ is an orthonormal basis in $\cG$, one infers that 
  \begin{equation}\label{ssfabc}
   \xi(\lambda) 
   =\sum_{j \in J}\lim_{\varepsilon\downarrow 0}\frac{1}{\pi} \Bigl(\bigl(\Im\bigl( \log \big(\overline{M_B(\lambda+i\varepsilon)}\big) 
   - \log \big(\overline{M_A(\lambda + i \varepsilon)}\big)\bigr)
   \varphi_j,\varphi_j\Bigr)_\cG 
   \end{equation}
  for a.e. $\lambda\in\dR$.
We emphasize that in contrast to the spectral shift function in Theorem~\ref{mainssf2}, here the spectral shift 
function $\xi$ in \eqref{ssfab} and \eqref{ssfabc} is not necessarily nonnegative.

\section{Schr\"{o}dinger operators with $\delta$-potentials supported on hypersurfaces}\label{ap2sec}

The aim of this section is to determine a spectral shift function for the pair $\{H,H_{\delta,\alpha}\}$, where $H=-\Delta$ is
the usual self-adjoint Laplacian in $L^2(\bbR^n)$, and $H_{\delta,\alpha}=-\Delta-\alpha \delta_\cC$ is a self-adjoint Schr\"{o}dinger operator with $\delta$-potential 
of strength $\alpha$ supported on a  compact hypersurface $\cC$ in $\dR^n$ which splits $\dR^n$ in a bounded interior domain and an unbounded exterior domain. 
Throughout this section we shall assume that the following hypothesis holds.

\begin{hypothesis}\label{hypo6}
Let $n \in \bbN$, $n \geq 2$, and $\Omega_{\rm i}$ be a nonempty, open, bounded interior domain 
in $\dR^n$ with a smooth boundary $\partial\Omega_{\rm i}$ and let 
$\Omega_{\rm e}=\dR^n\backslash\overline{\Omega_{\rm i}}$ be the corresponding 
exterior domain. The common boundary of the interior domain $\Omega_{\rm i}$ and exterior domain $\Omega_{\rm e}$ will be denoted by $\cC=\partial\Omega_{\rm e}=\partial\Omega_{\rm i}$. Furthermore, 
let $\alpha\in C^1(\cC)$ be a real-valued function on the boundary $\cC$.
\end{hypothesis}

We consider the self-adjoint operators in $L^2(\bbR^n)$, 
\begin{equation*}
 H f=-\Delta f,\quad \dom (H)=H^2(\dR^n),
\end{equation*}
and 
\begin{equation*}
\begin{split}
 H_{\delta,\alpha} f &= - \Delta f,\\
 \dom (H_{\delta,\alpha})&=\left\{f=\begin{pmatrix}f_{\rm i}\\[1mm] f_{\rm e}\end{pmatrix}\in H^2(\Omega_{\rm i})\times H^2(\Omega_{\rm e}) \, \Bigg| \, 
 \begin{matrix}\gamma_D^{\rm i}f_{\rm i}=\gamma_D^{\rm e}f_{\rm e},\quad\quad\!\\[1mm]
\alpha\gamma_D^{\rm i} f_{\rm i}=\gamma_N^{\rm i} f_{\rm i}+\gamma_N^{\rm e}f_{\rm e}\end{matrix}\right\},
\end{split}
 \end{equation*}
in  $L^2(\bbR^n)$.
Here $f_{\rm i}$ and $f_{\rm e}$ denote the restrictions of a function $f$ on $\dR^n$ onto $\Omega_{\rm i}$ and $\Omega_{\rm e}$, and 
$\gamma_D^{\rm i}$, $\gamma_D^{\rm e}$ and $\gamma_N^{\rm i}$, $\gamma_N^{\rm e}$ are the Dirichlet and Neumann trace operators on $H^2(\Omega_{\rm i})$ and 
$H^2(\Omega_{\rm e})$, respectively.  We note that 
$H_{\delta,\alpha}$  coincides with the self-adjoint operator associated
to the quadratic form
\begin{equation*}
 \mathfrak h_{\delta,\alpha}[f,g]=(\nabla f,\nabla g)-\int_\Sigma \alpha(x) f(x) \overline{g(x)}\,d\sigma(x),\quad f,g\in H^1(\dR^n), 
\end{equation*}
see \cite[Proposition~3.7]{BLL13-AHP} and \cite{BEKS94} for more details. For $c\in\dR$ we shall also make use of the self-adjoint operator
\begin{equation*}
\begin{split}
 H_{\delta,c} f &= - \Delta f,\\
 \dom (H_{\delta,c}) & = \left\{f=\begin{pmatrix}f_{\rm i}\\[1mm] f_{\rm e}\end{pmatrix}\in H^2(\Omega_{\rm i})\times H^2(\Omega_{\rm e}) \, \Bigg| \, 
 \begin{matrix}\gamma_D^{\rm i}f_{\rm i}=\gamma_D^{\rm e}f_{\rm e},\quad\quad\!\\[1mm]
c\gamma_D^{\rm i} f_{\rm i}=\gamma_N^{\rm i} f_{\rm i}+\gamma_N^{\rm e}f_{\rm e}\end{matrix}\right\}. 
\end{split}
\end{equation*}

The following lemma will be useful for 
the $\sS_p$-estimates in the proof of Theorem~\ref{dddthm}.

\begin{lemma}\label{usel}
Let 
$X\in \cL(L^2(\dR^n),H^t(\cC))$, and assume that 
$\ran(X)\subseteq H^s(\cC)$ for some $s>t\geq 0$. Then $X$ is compact 
and $($cf.\ \cite[Lemma 4.7]{BLL13-IEOT}$)$   
 \begin{equation*}
  X\in\sS_r\bigl(L^2(\dR^n),H^t(\cC)\bigr)\, \text{ for all } \, r> (n-1)/(s-t).
 \end{equation*}
\end{lemma}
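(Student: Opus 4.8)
The plan is to reduce the statement to a known Schatten-class estimate for the resolvent of an elliptic operator on the compact manifold $\cC$, combined with an interpolation argument. First I would observe that since $\cC$ is a smooth compact hypersurface of dimension $n-1$, the operator $(1-\Delta_\cC)^{-1/2}$, where $\Delta_\cC$ is the Laplace--Beltrami operator on $\cC$, maps $H^t(\cC)$ isomorphically onto $H^{t+1}(\cC)$, and the Weyl asymptotics for its eigenvalues give $(1-\Delta_\cC)^{-\rho/2}\in\sS_r(L^2(\cC))$ for all $r>(n-1)/\rho$, $\rho>0$. The key reduction is then to write $X = \iota_{s,t}\circ X$, where $\iota_{s,t}:H^s(\cC)\hookrightarrow H^t(\cC)$ is the (compact) Sobolev embedding, which we may identify with $(1-\Delta_\cC)^{-(s-t)/2}$ viewed as a bounded operator from $H^s(\cC)$ (equivalently, acting on $H^t(\cC)$-scaled spaces) into $H^t(\cC)$. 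Since $X$ factors through this embedding and $X\in\cL(L^2(\dR^n),H^t(\cC))$, the composition inherits the Schatten-class membership of $\iota_{s,t}$.

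The main steps, in order, are as follows. (1) Fix the reference operator $\Lambda_\cC := (1-\Delta_\cC)^{1/2}$ on $L^2(\cC)$, a positive self-adjoint operator with $\Lambda_\cC^{-1}\in\sS_r$ for $r>n-1$, and more generally $\Lambda_\cC^{-\rho}\in\sS_r$ for $r>(n-1)/\rho$ by the Weyl eigenvalue asymptotics $\lambda_k(\Lambda_\cC)\sim c\,k^{1/(n-1)}$. (2) Use that the $H^t(\cC)$-norm is equivalent to $\|\Lambda_\cC^t\,\cdot\,\|_{L^2(\cC)}$, so that the hypothesis $\ran(X)\subseteq H^s(\cC)$ means $\Lambda_\cC^s X$ is a well-defined operator into $L^2(\cC)$; by the closed graph theorem $\Lambda_\cC^s X\in\cL(L^2(\dR^n),L^2(\cC))$ (closedness of $\Lambda_\cC^s$ together with boundedness of $X$ into $H^t$). (3) Factor
\[
X = \Lambda_\cC^{-s}\cdot(\Lambda_\cC^s X) = \big(\Lambda_\cC^{-(s-t)}\big)\,\Lambda_\cC^{-t}\,(\Lambda_\cC^s X),
\]
and note that $\Lambda_\cC^{-t}(\Lambda_\cC^s X)\in\cL(L^2(\dR^n),H^t(\cC))$ while $\Lambda_\cC^{-(s-t)}\in\sS_r(L^2(\cC))$ — or, more precisely, since we want membership in $\sS_r(L^2(\dR^n),H^t(\cC))$, we multiply the Schatten factor on the correct side and use that $\sS_r$ is a two-sided ideal under composition with bounded operators. (4) Conclude $X\in\sS_r(L^2(\dR^n),H^t(\cC))$ for $r>(n-1)/(s-t)$, and compactness follows since $r<\infty$. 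This is essentially the argument of \cite[Lemma~4.7]{BLL13-IEOT}, to which the statement already refers.

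The step requiring the most care is (2)–(3), namely making rigorous the claim that $X$ ''factors through'' the embedding with the right boundedness and that the Schatten ideal property is applied on the correct side. The subtlety is that $\sS_r$ estimates are not symmetric between the domain and target Hilbert spaces when those spaces are different, so one must be careful to keep $H^t(\cC)$ as the target throughout and insert the smoothing power $\Lambda_\cC^{-(s-t)}$ as a map $H^t(\cC)\to H^t(\cC)$ (unitarily equivalent, via $\Lambda_\cC^{t}$, to $\Lambda_\cC^{-(s-t)}$ on $L^2(\cC)$), whence its singular values are exactly $\lambda_k(\Lambda_\cC)^{-(s-t)}\sim c\,k^{-(s-t)/(n-1)}$, which lie in $\ell^r$ precisely for $r>(n-1)/(s-t)$. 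Combined with the two-sided ideal property $\mathcal L\cdot\sS_r\cdot\mathcal L\subseteq\sS_r$, this yields the claim. I do not expect any genuinely hard obstruction here; it is a bookkeeping argument around the spectral theory of $\Delta_\cC$ and the standard interpolation/embedding properties of Sobolev spaces on a compact manifold.
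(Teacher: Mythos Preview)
Your proposal is correct and is the standard argument: factor $X$ through the compact Sobolev embedding $H^s(\cC)\hookrightarrow H^t(\cC)$, identify that embedding (up to unitary equivalence) with $\Lambda_\cC^{-(s-t)}$ on $L^2(\cC)$, and read off the Schatten-class membership from the Weyl asymptotics $\lambda_k(\Lambda_\cC)\sim c\,k^{1/(n-1)}$. The paper does not supply its own proof of this lemma at all---it simply cites \cite[Lemma~4.7]{BLL13-IEOT}---so your argument is exactly the content that reference would provide, and there is nothing further to compare.
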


Next we define interior and exterior
Dirichlet-to-Neumann maps $\cD_{\rm i}(z)$ and $\cD_{\rm e}(\zeta)$ as operators in $L^2(\cC)$ for all $z,\zeta \in \dC\backslash [0,\infty)=\rho(H)$. One notes 
that for $\varphi,\psi\in H^1(\cC)$ and $z,\zeta \in \dC\backslash [0,\infty)$, the boundary value problems 
\begin{equation}\label{i-bvp}
 -\Delta f_{{\rm i},z} = z f_{{\rm i},z},\quad \gamma_D^{\rm i} f_{{\rm i},z}=\varphi,
\end{equation}
and 
\begin{equation}\label{e-bvp}
 -\Delta f_{{\rm e},\zeta} = \zeta f_{{\rm e},\zeta},\quad \gamma_D^{\rm e} f_{{\rm e},\zeta} = \psi,
\end{equation}
admit unique solutions $f_{{\rm i},z}\in H^{3/2}(\Omega_{\rm i})$ and $f_{{\rm e},\zeta}\in H^{3/2}(\Omega_{\rm e})$, respectively. 
The corresponding solution operators are denoted by
\begin{equation*}
 P_{\rm i}(z):L^2(\cC) \rightarrow L^2(\Omega_{\rm i}),\quad \varphi\mapsto f_{{\rm i},z},
\end{equation*}
and
\begin{equation*}
 P_{\rm e}(\zeta):L^2(\cC) \rightarrow L^2(\Omega_{\rm e}),\quad \psi\mapsto f_{{\rm e},\zeta}.
\end{equation*}
The {\it interior Dirichlet-to-Neumann map} in $L^2(\cC)$, 
\begin{equation}\label{di}
 \cD_{\rm i}(z):H^1(\cC) \rightarrow L^2(\cC),\quad \varphi\mapsto \gamma_N^{\rm i} P_{\rm i}(z)\varphi,
\end{equation}
maps Dirichlet boundary values $\gamma_D^{\rm i} f_{{\rm i},z}$ of the solutions 
$f_{{\rm i},z}\in H^{3/2}(\Omega_{\rm i})$ of \eqref{i-bvp} onto the corresponding Neumann
boundary values $\gamma_N^{\rm i} f_{{\rm i},z}$, and the {\it exterior Dirichlet-to-Neumann map}  in $L^2(\cC)$, 
\begin{equation}\label{de}
 \cD_{\rm e}(\zeta):H^1(\cC) \rightarrow L^2(\cC),\quad 
 \psi\mapsto \gamma_N^{\rm e} P_{\rm e}(\zeta)\psi,
\end{equation}
maps Dirichlet boundary values $\gamma_D^{\rm e} f_{{\rm e},\zeta}$ of the solutions 
$f_{{\rm e},\zeta}\in H^{3/2}(\Omega_{\rm e})$ of \eqref{e-bvp} onto the corresponding Neumann
boundary values $\gamma_N^{\rm e} f_{{\rm e},\zeta}$. The interior and 
exterior Dirichlet-to-Neumann maps are both closed unbounded operators in $L^2(\cC)$.

In the next theorem a spectral shift function for the pair $\{H,H_{\delta,\alpha}\}$ is expressed in terms of the limits of the sum of the 
interior and exterior Dirichlet-to-Neumann map $\cD_{\rm i}(z)$ and $\cD_{\rm e}(z)$ and the function $\alpha$. It will turn out that the
operators $\cD_{\rm i}(z) +\cD_{\rm e}(z)$ are boundedly invertible for all $z\in\dC\backslash [0,\infty)$ and for our purposes
it is convenient to work with the function 
\begin{equation}\label{ee}
 z \mapsto\cE(z)=\bigl(\cD_{\rm i}(z) +\cD_{\rm e}(z)\bigr)^{-1},\quad z\in\dC\backslash[0,\infty). 
\end{equation}
It was shown in \cite[Proposition~3.2~(iii) and Remark~3.3]{BLL13-AHP} that 
$\cE(z)$ is a compact operator in $L^2(\cC)$ which extends the acoustic single layer potential for the Helmholtz equation, that is, 
\begin{equation*}
 (\cE(z)\varphi)(x)=\int_\cC G(z,x,y)\varphi(y)d\sigma(y),\quad x\in\cC,\,\,\varphi\in C^\infty(\cC),
\end{equation*}
where $G(z,\,\cdot\,,\,\cdot\,)$, $z\in\dC\backslash [0,\infty)$, represents the integral kernel of the resolvent of $H$ (cf.\ \cite[Chapter~6]{McL00} and \cite[Remark~3.3]{BLL13-AHP}). Explicitly, 
\begin{align*}
G(z,x,y) = (i/4) \big(2\pi z^{-1/2} |x - y|\big)^{(2-n)/2} 
H^{(1)}_{(n-2)/2}\big(z^{1/2}|x - y|\big),&   \\
z \in \bbC \backslash [0,\infty), \; \Im\big(z^{1/2}\big) > 0, 
\; x, y \in\bbR^n, \; x \neq y, \; n\ge 2.&
\end{align*}
Here $H^{(1)}_{\nu}(\, \cdot \,)$ denotes the Hankel function of the first kind 
with index $\nu\geq 0$ (cf.\ \cite[Sect.~9.1]{AS72}). 

We mention that the trace class property of the difference of the
$2k+1$th powers of the resolvents in the next theorem is known from \cite{BLL13-AHP} (see also \cite{BLL-Exner}).

\begin{theorem}\label{dddthm}
Assume Hypothesis~\ref{hypo6},  
let $\cE(z)$ be defined as in \eqref{ee}, let $\alpha\in C^1(\cC)$ be a real-valued function and fix 
$c>0$ such that $\alpha(x)<c$ for all $x\in\cC$. 
Then the following assertions $(i)$ and $(ii)$ hold for $k \in \bbN_0$ such that $k\geq (n-3)/4$:
 \begin{itemize}
  \item [$(i)$] The difference of the $2k+1$th-powers of the resolvents of $H$ and $H_{\delta,\alpha}$ is 
a trace class operator, that is,
\begin{equation*}
\big[(H_{\delta,\alpha} - z I_{L^2(\bbR^n)})^{-(2k+1)} 
- (H - z I_{L^2(\bbR^n)})^{-(2k+1)}\big] \in \sS_1\bigl(L^2(\dR^n)\bigr)
\end{equation*}
holds for all $z\in\rho(H_{\delta,\alpha})=\rho(H)\cap\rho(H_{\delta,\alpha})$. 
 \item [$(ii)$] For any orthonormal basis $(\varphi_j)_{j \in J}$ in $L^2(\cC)$ the function 
  \begin{equation*}
   \xi(\lambda) 
   =\sum_{j \in J} \lim_{\varepsilon\downarrow 0}\frac{1}{\pi} \Bigl(\bigl(\Im\bigl( \log (\cM_\alpha(\lambda+i\varepsilon)) - \log (\cM_0(\lambda + i \varepsilon))\bigr)\bigr)
   \varphi_j,\varphi_j\Bigr)_{L^2(\cC)}
  \end{equation*}
for a.e. $\lambda\in\dR$ with 
\begin{align}
\cM_0(z)&= - c^{-1}\bigl(c \cE(z) - I_{L^2(\cC)}\bigr)^{-1},   \lb{4.5a} \\
\cM_\alpha(z)&= (c-\alpha)^{-1}\bigl(\alpha\cE(z)- I_{L^2(\cC)}\bigr)\bigl(c \cE(z) - I_{L^2(\cC)}\bigr)^{-1},   \lb{4.5b} 
\end{align}
for $z\in\dC\backslash\dR$,
is a spectral shift function for the pair $\{H,H_{\delta,\alpha}\}$ such that $\xi(\lambda)=0$ for 
$\lambda < \inf(\sigma(H_{\delta,c}))$ and the trace formula
\begin{align*}
 \tr_{L^2(\bbR^n)}\bigl( (H_{\delta,\alpha} -  z I_{L^2(\bbR^n)})^{-(2k+1)} 
& - (H - z I_{L^2(\bbR^n)})^{-(2k+1)}\bigr)   \\ 
& \quad = - (2k+1) \int_\dR \frac{\xi(\lambda)\, d\lambda}{(\lambda - z)^{2k+2}} 
 \end{align*}
is valid for all $z \in \rho(H_{\delta,\alpha})=\rho(H)\cap\rho(H_{\delta,\alpha})$.
\end{itemize}
\end{theorem}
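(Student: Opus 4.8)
The plan is to apply the abstract machinery from Section~\ref{ssfsec} --- specifically the comparability version of Theorem~\ref{mainssf2} culminating in \eqref{ssfabc} --- with the intermediate operator $C:=H_{\delta,c}$. First I would verify the operator-theoretic setup: with $A:=H$, $B:=H_{\delta,\alpha}$, and $C=H_{\delta,c}$, one checks that $S_A=H\cap H_{\delta,c}$ and $S_B=H_{\delta,\alpha}\cap H_{\delta,c}$ coincide (both equal the operator $S$ with $\dom(S)=\{f\in H^2(\dR^n)\mid f\!\upharpoonright_\cC=0\}$ described in the Introduction) and are densely defined and closed. The natural quasi boundary triple here has boundary space $\cG=L^2(\cC)$ and boundary maps built from the Dirichlet and Neumann traces; Theorem~\ref{ratemal} supplies the verification that $\{\cG,\Gamma_0,\Gamma_1\}$ is indeed a quasi boundary triple. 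With a suitable choice of $\Gamma_0,\Gamma_1$ one arranges $C=T\!\upharpoonright\ker(\Gamma_0)$, $H=T\!\upharpoonright\ker(\Gamma_1^A)$ and $H_{\delta,\alpha}=T\!\upharpoonright\ker(\Gamma_1^B)$, and a direct computation using the boundary conditions together with the definitions \eqref{i-bvp}--\eqref{de} identifies the two Weyl functions as $M_A=\cM_0$ and $M_B=\cM_\alpha$ given by \eqref{4.5a}--\eqref{4.5b}; the key algebraic fact is that $\gamma_N^{\rm i} P_{\rm i}(z)+\gamma_N^{\rm e}P_{\rm e}(z)=\cD_{\rm i}(z)+\cD_{\rm e}(z)=\cE(z)^{-1}$ on the appropriate domain, and that $\cE(z)$ is boundedly invertible for $z\in\dC\backslash[0,\infty)$ as recorded after \eqref{ee}.

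The comparability inequalities needed are $(C-\zeta I)^{-1}\ge(H-\zeta I)^{-1}$ and $(C-\zeta I)^{-1}\ge(H_{\delta,\alpha}-\zeta I)^{-1}$ for suitable real $\zeta$ in the resolvent sets. These follow from the form ordering: since $\alpha(x)<c$ pointwise, the form $\mathfrak h_{\delta,c}\le\mathfrak h_{\delta,\alpha}$ and $\mathfrak h_{\delta,c}\le\mathfrak h$ (the latter because the $\delta$-form term with coefficient $c>0$ is subtracted), so by the standard correspondence between form ordering and ordering of resolvents one gets the stated inequalities for $\zeta$ below $\inf\sigma(H_{\delta,c})$. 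This also pins down the normalization point: $\xi_A$ and $\xi_B$ both vanish near such $\zeta$, hence $\xi=\xi_B-\xi_A$ vanishes for $\lambda<\inf(\sigma(H_{\delta,c}))$.

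Next I would discharge the Schatten-class hypotheses of Theorem~\ref{mainssf2} for the pairs $\{C,H\}$ and $\{C,H_{\delta,\alpha}\}$, choosing $k\in\bbN_0$ with $k\ge(n-3)/4$, i.e.\ $2k+1>(n-1)/2$. This is where Lemma~\ref{usel} does the work: the $\gamma$-fields map $L^2(\cC)$ into $H^{3/2}$ on the domains and their traces/derivatives gain smoothness, so the relevant operators $\overline{\gamma(z)}$, $\gamma(\overline z)^*$, and the derivatives $\tfrac{d^j}{dz^j}\overline{M(z)}$ (which by \eqref{gammad3} factor through $\gamma(\overline z)^*(A_0-z)^{-(j-1)}\overline{\gamma(z)}$) land in $\sS_r(L^2(\cC))$ with the exponents controlled by $(n-1)/(s-t)$; chasing the Sobolev indices shows the products land in $\sS_1$ and $\tfrac{d^j}{dz^j}\overline{M(z)}\in\sS_{(2k+1)/j}$ for $j=1,\dots,2k+1$ precisely when $2k+1>(n-1)/2$. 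I expect this Schatten-exponent bookkeeping --- getting the smoothing orders of the solution operators $P_{\rm i}(z),P_{\rm e}(z)$ and the trace maps exactly right, and combining them through the resolvent powers --- to be the main technical obstacle; the ellipticity of $-\Delta-z$ on $\Omega_{\rm i}$ and $\Omega_{\rm e}$ and elliptic regularity up to the boundary are the tools. Once these conditions hold, Theorem~\ref{mainssf2}(i) applied to $\{C,H\}$ and $\{C,H_{\delta,\alpha}\}$ gives the trace class property of the $(2k+1)$th-power resolvent differences, and hence by the triangle-type identity displayed before \eqref{ssfab} also for the pair $\{H,H_{\delta,\alpha}\}$, proving part~$(i)$. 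Finally, assembling formula \eqref{ssfabc} with $M_A=\cM_0$, $M_B=\cM_\alpha$ yields the stated expression for $\xi$, the vanishing below $\inf(\sigma(H_{\delta,c}))$ follows from the normalization of $\xi_A,\xi_B$ noted above, and the trace formula is the corresponding specialization of the identity preceding \eqref{ssfab}, completing part~$(ii)$.
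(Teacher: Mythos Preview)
Your proposal is correct and follows essentially the same route as the paper: introduce $H_{\delta,c}$ as the comparison operator, build a single quasi boundary triple on $T$ (with $H_{\delta,c}=\ker(\Gamma_0)$) via Theorem~\ref{ratemal}, identify the Weyl function as $\cM_\alpha$ (and $\cM_0$ in the special case $\alpha=0$), verify the sign condition by form ordering $\mathfrak h_{\delta,c}\le\mathfrak h_{\delta,\alpha}$, and check the $\sS_p$-hypotheses of Theorem~\ref{mainssf2} using the smoothing estimate for $(H_{\delta,c}-z)^{-1}$ together with Lemma~\ref{usel}; then subtract the two spectral shift functions as in \eqref{ssfab}. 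One small slip: it is $\cD_{\rm i}(z)+\cD_{\rm e}(z)$ that is boundedly invertible (with compact inverse $\cE(z)$), not $\cE(z)$ itself; what you actually need there is that $c\cE(z)-I_{L^2(\cC)}$ and $\alpha\cE(z)-I_{L^2(\cC)}$ are boundedly invertible for $z\in\dC\backslash\dR$, which the paper establishes via a Fredholm/compactness argument.
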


\begin{proof}
The structure and underlying idea of the proof of Theorem~\ref{dddthm} is as follows: In the first two steps
a suitable quasi boundary triple and its Weyl function are constructed. In the third step it is shown that 
the assumptions in Theorem~\ref{mainssf2} are satisfied. 

\noindent 
{\it Step 1.}
Since $c-\alpha(x)\not=0$ for all $x\in\cC$ by assumption, the closed symmetric operator $S=H_{\delta,c}\cap H_{\delta,\alpha}$ is given by
\begin{equation*}
Sf=-\Delta f, \quad \dom (S) = \bigl\{f\in H^2(\dR^n) \, \big| \, 
\gamma_D^{\rm i}f_{\rm i}=\gamma_D^{\rm e}f_{\rm e}=0\bigr\}. 
 \end{equation*}
In this step we show that the operator
\begin{equation*}
T = - \Delta,    \quad  
\dom (T) = \left\{f=\begin{pmatrix}f_{\rm i}\\f_{\rm e}\end{pmatrix}\in H^2(\Omega_{\rm i})\times H^2(\Omega_{\rm e}) \, \bigg| \, \gamma_D^{\rm i}f_{\rm i}=\gamma_D^{\rm e}f_{\rm e}\right\}, 
 \end{equation*}
satisfies $\overline T=S^*$ and that $\{L^2(\cC),\Gamma_0,\Gamma_1\}$, where
\begin{equation}\label{qbtw1}
 \Gamma_0 f=c\gamma_D^{\rm i} f_{\rm i}-(\gamma_N^{\rm i}f_{\rm i}+\gamma_N^{\rm e}f_{\rm e}),\quad \dom(\Gamma_0)=\dom(T),
\end{equation}
and
\begin{equation}\label{qbtw2}
 \Gamma_1 f= \frac{1}{c-\alpha}\bigl(\alpha\gamma_D^{\rm i} f_{\rm i}-(\gamma_N^{\rm i}f_{\rm i}+\gamma_N^{\rm e}f_{\rm e})\bigr),\quad 
 \dom(\Gamma_1)=\dom(T),
\end{equation} 
is a quasi boundary triple for $T\subset S^*$ such that
\begin{equation}\label{hhd}
 H_{\delta,c}=T\upharpoonright\ker(\Gamma_0)\, \text{ and } \, 
 H_{\delta,\alpha}=T\upharpoonright\ker(\Gamma_1).
\end{equation}

For the proof of this fact we make use of Theorem~\ref{ratemal} and verify next that 
assumptions $(i)$--$(iii)$  in Theorem~\ref{ratemal} are satisfied with the above choice 
of $S$, $T$ and boundary maps $\Gamma_0$ and $\Gamma_1$. For $f,g\in\dom (T)$ one computes 
\begin{equation*}
 \begin{split}
&(\Gamma_1 f,\Gamma_0 g)_{L^2(\cC)}-(\Gamma_0 f,\Gamma_1 g)_{L^2(\cC)}\\
&\quad =\Bigl(\tfrac{1}{c-\alpha}\bigl(\alpha\gamma_D^{\rm i} f_{\rm i}-(\gamma_N^{\rm i}f_{\rm i}+\gamma_N^{\rm e}f_{\rm e})\bigr), 
                c\gamma_D^{\rm i} g_{\rm i}-(\gamma_N^{\rm i}g_{\rm i}+\gamma_N^{\rm e}g_{\rm e})\Bigr)_{L^2(\cC)}\\
&\qquad - \Bigl(c\gamma_D^{\rm i} f_{\rm i}-(\gamma_N^{\rm i}f_{\rm i}+\gamma_N^{\rm e}f_{\rm e}),
                     \tfrac{1}{c-\alpha}\bigl(\alpha\gamma_D^{\rm i} g_{\rm i}-(\gamma_N^{\rm i}g_{\rm i}+\gamma_N^{\rm e}g_{\rm e})\bigr)\Bigr)_{L^2(\cC)}\\
&\quad =-\bigl(\tfrac{\alpha}{c-\alpha}\gamma_D^{\rm i} f_{\rm i}, \gamma_N^{\rm i}g_{\rm i}+\gamma_N^{\rm e}g_{\rm e}\bigr)_{L^2(\cC)}  
-\bigl(\gamma_N^{\rm i}f_{\rm i}+\gamma_N^{\rm e}f_{\rm e}, \tfrac{c}{c-\alpha}\gamma_D^{\rm i} g_{\rm i}\bigr)_{L^2(\cC)}\\
&\qquad +\bigl(\tfrac{c}{c-\alpha}\gamma_D^{\rm i} f_{\rm i}, \gamma_N^{\rm i}g_{\rm i}+\gamma_N^{\rm e}g_{\rm e}\bigr)_{L^2(\cC)} 
 +\bigl(\gamma_N^{\rm i}f_{\rm i}+\gamma_N^{\rm e}f_{\rm e}, \tfrac{\alpha}{c-\alpha}\gamma_D^{\rm i} g_{\rm i}\bigr)_{L^2(\cC)}\\                    
&\quad =\bigl(\gamma_D^{\rm i} f_{\rm i},\gamma_N^{\rm i}g_{\rm i}+\gamma_N^{\rm e}g_{\rm e}\bigr)_{L^2(\cC)} -\bigl(\gamma_N^{\rm i}f_{\rm i}+\gamma_N^{\rm e}f_{\rm e},\gamma_D^{\rm i} g_{\rm i}\bigr)_{L^2(\cC)},
 \end{split}
\end{equation*}
and on the other hand, Green's identity and $\gamma_D^{\rm i}f_{\rm i}=\gamma_D^{\rm e}f_{\rm e}$ and $\gamma_D^{\rm i}g_{\rm i}=\gamma_D^{\rm e}g_{\rm e}$ yield 
\begin{equation*}
 \begin{split}
&(Tf,g)_{L^2(\dR^n)}-(f,Tg)_{L^2(\dR^n)}\\
&\quad =(-\Delta f_{\rm i},g_{\rm i})_{L^2(\Omega_{\rm i})}-(f_{\rm i},-\Delta g_{\rm i})_{L^2(\Omega_{\rm i})}+(-\Delta f_{\rm e},g_{\rm e})_{L^2(\Omega_{\rm e})} \\
& \qquad 
-(f_{\rm e},-\Delta g_{\rm e})_{L^2(\Omega_{\rm e})}\\
&\quad =(\gamma_D^{\rm i}f_{\rm i},\gamma_N^{\rm i}g_{\rm i})_{L^2(\cC)}-(\gamma_N^{\rm i}f_{\rm i},\gamma_D^{\rm i}g_{\rm i})_{L^2(\cC)}\\
&\qquad
 +(\gamma_D^{\rm e}f_{\rm e},\gamma_N^{\rm e}g_{\rm e})_{L^2(\cC)}-(\gamma_N^{\rm e}f_{\rm e},\gamma_D^{\rm e}g_{\rm e})_{L^2(\cC)}\\
 &\,\,=\bigl(\gamma_D^{\rm i} f_{\rm i},\gamma_N^{\rm i}g_{\rm i}+\gamma_N^{\rm e}g_{\rm e}\bigr)_{L^2(\cC)} 
  -\bigl(\gamma_N^{\rm i}f_{\rm i}+\gamma_N^{\rm e}f_{\rm e},\gamma_D^{\rm i} g_{\rm i}\bigr)_{L^2(\cC)},
\end{split}
\end{equation*}
and hence condition $(i)$ in Theorem~\ref{ratemal} holds. Next, in order to show that $\ran (\Gamma_0,\Gamma_1)^\top$ is dense in $L^2(\cC)$ we recall that 
\begin{equation*}
 \begin{pmatrix}
  \gamma_D^{\rm i}\\[1mm] \gamma_N^{\rm i}
 \end{pmatrix}:H^2(\Omega_{\rm i})\rightarrow H^{3/2}(\cC)\times H^{1/2}(\cC)
\end{equation*}
and
\begin{equation*}
 \begin{pmatrix}
  \gamma_D^{\rm e}\\[1mm] \gamma_N^{\rm e}
 \end{pmatrix}:H^2(\Omega_{\rm e})\rightarrow H^{3/2}(\cC)\times H^{1/2}(\cC)
\end{equation*}
are surjective mappings. It follows that also the mapping
\begin{equation}\label{tmap}
 \begin{pmatrix}
  \gamma_D^{\rm i} \\[1mm] \gamma_N^{\rm i}+\gamma_N^{\rm e}
 \end{pmatrix}:\dom(T) \rightarrow H^{3/2}(\cC)\times H^{1/2}(\cC)
\end{equation}
is surjective, and since the $2\times 2$-block operator matrix
\begin{equation*}
 \Theta:=\begin{pmatrix} c I_{L^2(\cC)} & -I_{L^2(\cC)} \\[1mm]  \frac{\alpha}{c-\alpha} I_{L^2(\cC)}
 &  - \frac{1}{c-\alpha} I_{L^2(\cC)} \end{pmatrix} 
\end{equation*}
is an isomorphism in $L^2(\cC)\times L^2(\cC)$, it follows that the range of the mapping, 
\begin{equation*}
\begin{pmatrix}\Gamma_0 \\ \Gamma_1\end{pmatrix}= \Theta  \begin{pmatrix}
  \gamma_D^{\rm i} \\[1mm] \gamma_N^{\rm i}+\gamma_N^{\rm e}
 \end{pmatrix}  :\dom(T)\rightarrow  L^2(\cC)\times L^2(\cC), 
\end{equation*}
is dense. Furthermore, as $C_0^\infty(\Omega_{\rm i})\times C_0^\infty(\Omega_{\rm e})$ is contained in $\ker(\Gamma_0)\cap\ker(\Gamma_1)$, it is clear that 
$\ker(\Gamma_0)\cap\ker(\Gamma_1)$ is dense in $L^2(\dR^n)$. Hence one concludes that 
condition $(ii)$ in Theorem~\ref{ratemal} is satisfied. Condition $(iii)$ in Theorem~\ref{ratemal} is satisfied since \eqref{hhd} holds by construction
and $H_{\delta,c}$ is self-adjoint. Thus, Theorem~\ref{ratemal}
implies that the closed symmetric operator 
\begin{equation*}
 T\upharpoonright\bigl(\ker(\Gamma_0)\cap\ker(\Gamma_1)\bigr)=H_{\delta,c}\cap H_{\delta,\alpha}=S
\end{equation*}
is densely defined, its adjoint coincides with $\overline T$, and 
$\{L^2(\cC),\Gamma_0,\Gamma_1\}$ is a quasi boundary triple for $T\subset S^*$ such that \eqref{hhd} holds.

\vskip 0.2cm\noindent
{\it Step 2.} In this step we prove that for $z \in \rho(H_{\delta,c})\cap\rho(H)$ 
the Weyl function corresponding to the quasi boundary triple 
$\{L^2(\cC),\Gamma_0,\Gamma_1\}$ is given by 
\begin{equation}\label{mddf}
\begin{split}
& M(z)= \frac{1}{c-\alpha}\bigl(\alpha\cE_{1/2}(z)- I_{L^2(\cC)}\bigr)\bigl(c \cE_{1/2}(z) 
- I_{L^2(\cC)}\bigr)^{-1},\\
& \dom(M(z)) =H^{1/2}(\cC),
\end{split}
\end{equation}
where $\cE_{1/2}(z)$ denotes the restriction of the operator $\cE(z)$ in \eqref{ee} onto $H^{1/2}(\cC)$. Furthermore, we verify 
that $M(z_1)$ and $M(z_2)^{-1}$ are bounded for some $z_1,z_2\in\dC\backslash\dR$, and we conclude that the closures
of the operators $M(z)$, $z\in\dC\backslash\dR$, in $L^2(\cC)$ are given by the operators $\cM_\alpha(z)$ in \eqref{4.5a}, \eqref{4.5b}.

It will first be shown that the operator $\cE(z)$ and its restriction $\cE_{1/2}(z)$ are well-defined 
for all $z \in \rho(H)=\dC\backslash [0,\infty)$. 
For this fix $z\in\dC\backslash [0,\infty)$, and let
\begin{equation}\label{flt}
 f_z =\begin{pmatrix} f_{{\rm i},z} \\ f_{{\rm e},z}\end{pmatrix} \in H^{3/2}(\Omega_{\rm i})\times H^{3/2}(\Omega_{\rm e})
\end{equation}
such that $\gamma_D^{\rm i}f_{{\rm i},z}=\gamma_D^{\rm e}f_{{\rm e},z}$, and
\begin{equation*}
-\Delta f_{{\rm i},z} = z f_{{\rm i},z}\, \text{ and } \, 
-\Delta f_{{\rm e},z} = z f_{{\rm e},z}.  
\end{equation*}
From the definition of $\cD_{\rm i}(z)$ and $\cD_{\rm e}(z)$ in \eqref{di} and \eqref{de} one concludes 
that 
\begin{equation}\label{useit2}
\begin{split}
 \bigl(\cD_{\rm i}(z)+\cD_{\rm e}(z)\bigr)\gamma_D^{\rm i}f_{{\rm i},z}&=\cD_{\rm i}(z)\gamma_D^{\rm i}f_{{\rm i},z}
 +\cD_{\rm e}(z)\gamma_D^{\rm e}f_{{\rm e},z}\\
 &=\gamma_N^{\rm i}f_{{\rm i},z}+\gamma_N^{\rm e}f_{{\rm e},z}.
 \end{split}
 \end{equation}
This also proves that $\cD_{\rm i}(z)+\cD_{\rm e}(z)$ is injective for $z\in\dC\backslash [0,\infty)$.
In fact, otherwise there would exist a 
function $f_z =(f_{{\rm i},z}, f_{{\rm e},z})^\top\not=0$ as in \eqref{flt} which would satisfy both conditions
\begin{equation}\label{plkoplko}
 \gamma_D^{\rm i}f_{{\rm i},z}=\gamma_D^{\rm e}f_{{\rm e},z} \, 
 \text{ and } \,\gamma_N^{\rm i}f_{{\rm i},z}+\gamma_N^{\rm e}f_{{\rm e},z}=0,
\end{equation}
and hence for all $h\in\dom(H)=H^2(\dR^n)$,  
Green's identity together with the conditions \eqref{plkoplko} would imply 
\begin{equation}\label{fvb}
\begin{split}
 (H h,f_z &)_{L^2(\dR^n)}-(h, z f_z )_{L^2(\dR^n)}\\
&\quad =(-\Delta h_{\rm i},f_{{\rm i},z})_{L^2(\Omega_{\rm i})}-(h_{\rm i},-\Delta f_{{\rm i},z})_{L^2(\Omega_{\rm i})}\\
&\qquad\qquad
            + (-\Delta h_{\rm e},f_{{\rm e},z})_{L^2(\Omega_{\rm e})}-(h_{\rm e},-\Delta f_{{\rm e},z})_{L^2(\Omega_{\rm e})}\\
&\quad =(\gamma_D^{\rm i}h_{\rm i},\gamma_N^{\rm i}f_{{\rm i},z})_{L^2(\cC)} 
- (\gamma_N^{\rm i}h_{\rm i},\gamma_D^{\rm i}f_{{\rm i},z})_{L^2(\cC)}\\       
&\qquad\qquad  +(\gamma_D^{\rm e}h_{\rm e},\gamma_N^{\rm e}f_{{\rm e},z})_{L^2(\cC)} 
- (\gamma_N^{\rm e}h_{\rm e},\gamma_D^{\rm e}f_{{\rm e},z})_{L^2(\cC)}  \\
&\quad =0,
\end{split}
\end{equation}
that is, $f_z \in\dom(H)$ 
and $H f_z = z f_z $; a contradiction since $z \in \rho(H)$. Hence,  
$$\ker\bigl(\cD_{\rm i}(z)+\cD_{\rm e}(z)\bigr)=\{0\},\quad z\in\dC\backslash [0,\infty),$$
and if we denote the restrictions of $\cD_{\rm i}(z)$ and $\cD_{\rm e}(z)$ onto $H^{3/2}(\cC)$ by $\cD_{{\rm i},3/2}(z)$
and $\cD_{{\rm e},3/2}(z)$, respectively, then also $\ker(\cD_{{\rm i},3/2}(z)+\cD_{{\rm e},3/2}(z))=\{0\}$ for $z\in\dC\backslash [0,\infty)$. Thus, we have shown that
$\cE(z)$ and its restriction $\cE_{1/2}(z)$ 
are well-defined for all $z \in \rho(H)=\dC\backslash [0,\infty)$. 

Furthermore, if the function $f_z $ in \eqref{flt} belongs to 
$H^2(\Omega_{\rm i})\times H^2(\Omega_{\rm e})$, that is, $f_z \in\ker(T - z I_{L^2(\bbR^n)})$, then
$\gamma_D^{\rm i}f_{{\rm i},z}=\gamma_D^{\rm e}f_{{\rm e},z}\in H^{3/2}(\cC)$ and hence besides \eqref{useit2} one also has 
\begin{equation}\label{useit3}
 \bigl(\cD_{{\rm i},3/2}(z)+\cD_{{\rm e},3/2}(z)\bigr)\gamma_D^{\rm i}f_{{\rm i},z}=\gamma_N^{\rm i}f_{{\rm i},z}+\gamma_N^{\rm e}f_{{\rm e},z}\in H^{1/2}(\cC).
 \end{equation} 
One concludes from \eqref{useit3} that
\begin{equation*}
\cE_{1/2}(z)
\bigl(\gamma_N^{\rm i}f_{{\rm i},z}+\gamma_N^{\rm e}f_{{\rm e},z}\bigr)=\gamma_D^{\rm i}f_{{\rm i},z},
\end{equation*}
and from \eqref{qbtw1} one then obtains 
\begin{equation}\label{ggb}
\begin{split}
\bigl(c\cE_{1/2}(z)-I_{L^2(\cC)}\bigr)\bigl(\gamma_N^{\rm i}f_{{\rm i},z}+\gamma_N^{\rm e}f_{{\rm e},z}\bigr)
&=c\gamma_D^{\rm i}f_{{\rm i},z}-\bigl(\gamma_N^{\rm i}f_{{\rm i},z} 
+ \gamma_N^{\rm e}f_{{\rm e},z}\bigr)\\
&=\Gamma_0f_z,  
\end{split}
\end{equation}
and
\begin{equation}\label{ggb2}
\bigl(\alpha\cE_{1/2}(z)-I_{L^2(\cC)}\bigr)\bigl(\gamma_N^{\rm i}f_{{\rm i},z}+\gamma_N^{\rm e}f_{{\rm e},z}\bigr)
=\alpha\gamma_D^{\rm i}f_{{\rm i},z}-\bigl(\gamma_N^{\rm i}f_{{\rm i},z}+\gamma_N^{\rm e}f_{{\rm e},z}\bigr).
\end{equation}
For $z \in \rho(H_{\delta,c})\cap\rho(H)$ one verifies $\ker(c\cE_{1/2}(z)-I_{L^2(\cC)})=\{0\}$ with the help of \eqref{ggb}. Then 
\eqref{qbtw1} and \eqref{tmap} yield
\begin{equation*}
\ran\bigl(c\cE_{1/2}(z)-I_{L^2(\cC)}\bigr)=\ran(\Gamma_0)=H^{1/2}(\cC).
\end{equation*}
Thus, it follows from \eqref{ggb}, \eqref{ggb2}, and \eqref{qbtw2} that
\begin{equation*}
 \begin{split}
  & \frac{1}{c-\alpha}\bigl(\alpha\cE_{1/2}(z)- I_{L^2(\cC)}\bigr)\bigl(c \cE_{1/2}(z) - I_{L^2(\cC)}\bigr)^{-1}\Gamma_0f_z \\
  &\qquad = \frac{1}{c-\alpha}\bigl(\alpha\cE_{1/2}(z)- I_{L^2(\cC)}\bigr)\bigl(\gamma_N^{\rm i}f_{{\rm i},z} 
  + \gamma_N^{\rm e}f_{{\rm e},z}\bigr)\\
  &\qquad = \frac{1}{c-\alpha}\Bigl(\alpha\gamma_D^{\rm i}f_{{\rm i},z} 
  - \bigl(\gamma_N^{\rm i}f_{{\rm i},z}+\gamma_N^{\rm e}f_{{\rm e},z}\bigr)\Bigr)\\
  &\qquad =\Gamma_1 f_z 
 \end{split}
\end{equation*}
holds for all $z \in \rho(H_{\delta,c})\cap\rho(H)$. This proves that the Weyl function corresponding to the quasi boundary triple \eqref{qbtw1}--\eqref{qbtw2}
is given by \eqref{mddf}.

Next it will be shown that $M(z)$ and $M(z)^{-1}$ are bounded for $z\in\dC\backslash\dR$. For this it suffices to check that the operators
\begin{equation}\label{ees}
 \alpha\cE_{1/2}(z)- I_{L^2(\cC)} \, \text{ and } \, c\cE_{1/2}(z)- I_{L^2(\cC)}
\end{equation}
are bounded and have bounded inverses. The argument is the same for both operators in \eqref{ees} 
and hence we discuss $\alpha\cE_{1/2}(z)- I_{L^2(\cC)}$ only. One recalls that
\begin{equation*}
 \cD_{\rm i}(z) +\cD_{\rm e}(z),\quad z\in\dC\backslash \dR,
\end{equation*}
maps onto $L^2(\cC)$, is boundedly invertible, and its inverse $\cE(z)$ in \eqref{ee} is a compact operator in $L^2(\cC)$ with $\ran(\cE(z))=H^1(\cC)$ (see   
\cite[Proposition~3.2~$(iii)$]{BLL13-AHP}). Hence also the restriction $\cE_{1/2}(z)$ of $\cE(z)$ onto $H^{1/2}(\cC)$ is bounded in $L^2(\cC)$. It follows that
$\alpha\cE_{1/2}(z)- I_{L^2(\cC)}$ is bounded, and its closure is given by
\begin{equation}\label{ovm}
\overline{\alpha \cE_{1/2}(z)- I_{L^2(\cC)}}=\alpha\cE(z)- I_{L^2(\cC)}\in\cL\bigl(L^2(\cC)\bigr), 
\quad z\in\dC\backslash\dR.
\end{equation}

In order to show that the inverse $(\alpha\cE_{1/2}(z)- I_{L^2(\cC)})^{-1}$ exists and is 
bounded for $z\in\dC\backslash \dR$ we first check that 
\begin{equation}\label{kern}
\ker\bigl(\alpha\cE(z)- I_{L^2(\cC)}\bigr)=\{0\},\quad z\in\dC\backslash\dR.
\end{equation}
In fact, assume that $z\in\dC\backslash\dR$ and $\varphi\in L^2(\cC)$
are such that $\alpha\cE(z)\varphi=\varphi$. 
It follows from  $\dom(\cE(z))=\ran(\cD_{\rm i}(z) +\cD_{\rm e}(z))=L^2(\cC)$ that there exists $\psi\in H^1(\cC)$ such that
\begin{equation}\label{zuzu}
 \varphi=\bigl(\cD_{\rm i}(z) +\cD_{\rm e}(z)\bigr)\psi, 
\end{equation}
and from \eqref{i-bvp}--\eqref{e-bvp} one concludes that there exists a unique 
\begin{equation*}
 f_z =\begin{pmatrix} f_{{\rm i},z} \\ f_{{\rm e},z}\end{pmatrix} \in H^{3/2}(\Omega_{\rm i})\times H^{3/2}(\Omega_{\rm e}) 
\end{equation*}
such that
\begin{equation}\label{df1}
\gamma_D^{\rm i}f_{{\rm i},z}=\gamma_D^{\rm e}f_{{\rm e},z}=\psi,
\end{equation}
and
\begin{equation*}
-\Delta f_{{\rm i},z} = z f_{{\rm i},z}\, \text{ and } \, 
-\Delta f_{{\rm e},z} = z f_{{\rm e},z}.  
\end{equation*}
Since $\varphi=\alpha \cE(z)\varphi=\alpha\psi$ by \eqref{zuzu}, one obtains from \eqref{useit2}, \eqref{df1}, and \eqref{zuzu} that
\begin{equation}\label{bcc}
\begin{split}
 \gamma_N^{\rm i}f_{{\rm i},z}+\gamma_N^{\rm e}f_{{\rm e},z}
 &=\bigl(\cD_{\rm i}(z)+\cD_{\rm e}(z)\bigr)\gamma_D^{\rm i}f_{{\rm i},z}\\
 &=\bigl(\cD_{\rm i}(z)+\cD_{\rm e}(z)\bigr)\psi\\
 &=\varphi\\
 &=\alpha\psi\\
 &=\alpha\gamma_D^{\rm i}f_{{\rm i},z}.
\end{split}
\end{equation}
For $h=(h_{\rm i},h_{\rm e})^\top\in\dom(H_{\delta,\alpha})$ one has  
\begin{equation}\label{df2}
 \gamma_D^{\rm i}h_{\rm i}=\gamma_D^{\rm e}h_{\rm e}\, \text{ and } \, 
 \gamma_N^{\rm i}h_{\rm i}+\gamma_N^{\rm e}h_{\rm e} 
 =\alpha\gamma_D^{\rm i}h_{\rm i}, 
\end{equation}
and in a similar way as in \eqref{fvb}, Green's identity together with \eqref{df1}, \eqref{bcc}, and \eqref{df2} imply
\begin{equation*}
\begin{split}
 (H_{\delta,\alpha}&h,f_z )_{L^2(\dR^n)}-(h,z f_z )_{L^2(\dR^n)}\\
&\quad =(-\Delta h_{\rm i},f_{{\rm i},z})_{L^2(\Omega_{\rm i})}-(h_{\rm i},-\Delta f_{{\rm i},z})_{L^2(\Omega_{\rm i})}\\
&\qquad
            + (-\Delta h_{\rm e},f_{{\rm e},z})_{L^2(\Omega_{\rm e})}-(h_{\rm e},-\Delta f_{{\rm e},z})_{L^2(\Omega_{\rm e})}\\
&\quad=(\gamma_D^{\rm i}h_{\rm i},\gamma_N^{\rm i}f_{{\rm i},z})_{L^2(\cC)}-(\gamma_N^{\rm i}h_{\rm i},\gamma_D^{\rm i}f_{{\rm i},z})_{L^2(\cC)}\\       
&\qquad +(\gamma_D^{\rm e}h_{\rm e},\gamma_N^{\rm e}f_{{\rm e},z})_{L^2(\cC)}-(\gamma_N^{\rm e}h_{\rm e},\gamma_D^{\rm e}f_{{\rm e},z})_{L^2(\cC)}  \\
&\quad =\bigl(\gamma_D^{\rm i}h_{\rm i},\gamma_N^{\rm i}f_{{\rm i},z}+\gamma_N^{\rm e}f_{{\rm e},z}\bigr)_{L^2(\cC)}
 -\bigl(\gamma_N^{\rm i}h_{\rm i}+\gamma_N^{\rm e}h_{\rm e},\gamma_D^{\rm i}f_{{\rm i},z}\bigr)_{L^2(\cC)}\\
&\quad =\bigl(\gamma_D^{\rm i}h_{\rm i},\alpha\gamma_D^{\rm i}f_{{\rm i},z}\bigr)_{L^2(\cC)}
 -\bigl(\alpha\gamma_D^{\rm i}h_{\rm i},\gamma_D^{\rm i}f_{{\rm i},z}\bigr)_{L^2(\cC)}\\ 
&\quad=0.
\end{split}
\end{equation*}
As $H_{\delta,\alpha}$ is self-adjoint one concludes that $f_z \in\dom (H_{\delta,\alpha})$ and 
$$f_z \in\ker(H_{\delta,\alpha} - z I_{L^2(\bbR^n)}).$$ 
Since $z\in\dC\backslash\dR$, 
this yields $f_z =0$ and therefore, $\psi= \gamma_D^{\rm i}f_{{\rm i},z}=0$ and hence $\varphi=0$ by \eqref{zuzu}, implying \eqref{kern}.

Since $\cE(z)$ is a compact operator in $L^2(\cC)$ (see 
\cite[Proposition~3.2\,$(iii)$]{BLL13-AHP}) also $\alpha\cE(z)$ is compact 
and together with \eqref{kern} one concludes that 
\begin{equation}\label{vvvv}
(\alpha\cE(z)-I_{L^2(\cC)})^{-1}\in\cL\big(L^2(\cC)\big).
\end{equation}
Hence also the restriction
\begin{equation*}
 \bigl(\alpha\cE_{1/2}(z)-I_{L^2(\cC)}\bigr)^{-1}
\end{equation*}
is a bounded operator in $L^2(\cC)$. Summing up, we have shown that the operators in \eqref{ees} are bounded and have bounded inverses for all $z\in\dC\backslash\dR$,
and hence the values $M(z)$ of the Weyl function in \eqref{mddf} are bounded and have bounded inverses for all $z\in\dC\backslash\dR$. 
From \eqref{mddf}, \eqref{ovm} and \eqref{vvvv} it follows that that
the closures
of the operators $M(z)$, $z\in\dC\backslash\dR$, in $L^2(\cC)$ are given by the operators $\cM_\alpha(z)$ in \eqref{4.5a}, \eqref{4.5b}.

\vskip 0.2cm\noindent
{\it Step 3.} Now we check that the operators $\{H_{\delta,c},H_{\delta,\alpha}\}$ and the Weyl function corresponding to the 
quasi boundary triple $\{L^2(\cC),\Gamma_0,\Gamma_1\}$ in Step 1 satisfy the assumptions of Theorem~\ref{mainssf2} for $n \in \bbN$, $n \geq 2$, and all $k\geq (n-3)/4$.

In fact, the sign condition \eqref{sign333} follows from the assumption $\alpha(x)<c$ and the fact that the closed quadratic forms 
$\mathfrak h_{\delta,\alpha}$ and $\mathfrak h_{\delta,c}$ associated to $H_{\delta,\alpha}$ and $H_{\delta,c}$ satisfy the inequality
$\mathfrak h_{\delta,c}\leq \mathfrak h_{\delta,\alpha}$. More precisely, the inequality for the quadratic forms yields
$\inf(\sigma(H_{\delta,c}))\leq \inf(\sigma(H_{\delta,\alpha}))$, and for
$\zeta < \inf (\sigma(H_{\delta,c}))$ the forms $\mathfrak h_{\delta,c}-\zeta$ and  $\mathfrak h_{\delta,\alpha}-\zeta$ are both nonnegative, satisfy 
the inequality $\mathfrak h_{\delta,c}-\zeta \leq \mathfrak h_{\delta,\alpha}-\zeta$, and hence the resolvents of the
corresponding nonnegative self-adjoint operators $H_{\delta,c}-\zeta I_{L^2(\dR^n)}$ and 
$H_{\delta,\alpha}-\zeta I_{L^2(\dR^n)}$ satisfy the inequality
\begin{equation*}
 (H_{\delta,c}-\zeta I_{L^2(\dR^n)})^{-1}\geq (H_{\delta,\alpha}-\zeta I_{L^2(\dR^n)})^{-1}, 
 \quad \zeta < \inf (\sigma(H_{\delta,c})) 
\end{equation*}
(see, e.g., \cite[Chapter~VI, $\S$~2.6]{K80} or \cite[Chapter~10, $\S$2, Theorem~6]{BS87}). 
Thus the  sign condition \eqref{sign333} in the assumptions of Theorem~\ref{mainssf2}
holds.

In order to verify the $\sS_p$-conditions 
\begin{align}
& \overline{\gamma(z)}^{(p)}\bigl( M(z)^{-1} \gamma({\ol z})^*\bigr)^{(q)}\in\sS_1\bigl(L^2(\dR^n)\bigr),\quad p+q=2k,  \label{ddc1} \\
& \bigl( M(z)^{-1} \gamma({\ol z})^*\bigr)^{(q)}\overline{\gamma(z)}^{(p)}\in\sS_1\bigl(L^2(\cC)\bigr),\quad p+q=2k,  \label{ddc2}
\end{align}
and 
 \begin{equation}\label{ddc3}
   \frac{d^j}{dz^j} \overline{M (z)}\in\sS_{(2k+1)/j}\bigl(L^2(\cC)\bigr),\quad j=1,\dots,2k+1,
 \end{equation}
for all $z \in \rho(H_{\delta,c})\cap\rho(H_{\delta,\alpha})$ in the assumptions of 
Theorem~\ref{mainssf2}, one first recalls the smoothing property
\begin{equation}\label{smoothi2}
 (H_{\delta,c} - z I_{L^2(\bbR^n)})^{-1} f \in H^{k+2}(\Omega_{\rm i})\times H^{k+2}(\Omega_{\rm e})
\end{equation}
for $f\in H^k(\Omega_{\rm i})\times H^k(\Omega_{\rm e})$ and $k \in \bbN_0$, which follows, for instance, from \cite[Theorem~4.20]{McL00}. Next one observes that \eqref{gstar}, \eqref{qbtw2}, 
and the definition of $H_{\delta,c}$ imply 
\begin{equation*}
\begin{split}
\gamma({\ol z})^*f&=\Gamma_1(H_{\delta,c} - z I_{L^2(\bbR^n)})^{-1}f\\
&=(c-\alpha)^{-1}\bigl(\alpha\gamma_D^{\rm i} -(\gamma_N^{\rm i}+\gamma_N^{\rm e})\bigr)(H_{\delta,c} - z I_{L^2(\bbR^n)})^{-1}f\\
&=(c-\alpha)^{-1}\bigl(c\gamma_D^{\rm i} -(\gamma_N^{\rm i}+\gamma_N^{\rm e}) + (\alpha- c)\gamma_D^{\rm i}\bigr)(H_{\delta,c} - z I_{L^2(\bbR^n)})^{-1}f,
\end{split}
\end{equation*}
which yields
\begin{equation}\label{plmplm}
\gamma({\ol z})^*f =-\gamma_D^{\rm i}(H_{\delta,c} - z I_{L^2(\bbR^n)})^{-1}f,\quad f\in L^2(\dR^n).
\end{equation}
Hence \eqref{gammad2}, \eqref{smoothi2}, and Lemma~\ref{usel} imply
\begin{equation}\label{klklkl}
\bigl(\gamma({\ol z})^*\bigr)^{(q)} 
 =-q! \, \gamma_D^{\rm i}(H_{\delta,c} - z I_{L^2(\bbR^n)})^{-(q+1)}\in\sS_r\bigl(L^2(\dR^n),L^2(\cC)\bigr)
\end{equation}
for $r> (n-1)/[2q+(3/2)]$,  $z \in \rho(H_{\delta,c})$ and $q \in \bbN_0$ 
(cf.\ \cite[Lemma~3.1]{BLL-Exner} for the case $c=0$). One also has
\begin{equation}\label{oppo}
 \overline{\gamma(z)}^{(p)}\in\sS_r\bigl(L^2(\cC),L^2(\dR^n)\bigr),\quad r> (n-1)/[2p+(3/2)],
\end{equation}
for all $z \in \rho(H_{\delta,c})$ and $p \in \bbN_0$. 
Furthermore,
\begin{equation}\label{mqqq}
   \frac{d^j}{dz^j} \overline{M(z)}
   = j! \, \gamma({\ol z})^*(H_{\delta,c} - z I_{L^2(\bbR^n)})^{-(j-1)} 
   \overline{\gamma(z)}
 \end{equation}
by \eqref{gammad3} and with the help of \eqref{plmplm} it follows that 
\begin{equation*}
\gamma({\ol z})^*(H_{\delta,c} - z I_{L^2(\dR^n)})^{-(j-1)} 
= - \gamma_D^{\rm i} (H_{\delta,c} - z I_{L^2(\dR^n)})^{-j} \in 
\sS_x\bigl(L^2(\dR^n),L^2(\cC)\bigr)
 \end{equation*}
for $x> (n-1)/[2j-(1/2)]$. Moreover, we have $\overline{\gamma(z)}\in\sS_y(L^2(\cC),L^2(\dR^n))$ for $y> 2 (n-1)/3$ by \eqref{oppo} and hence 
it follows from \eqref{mqqq} and the well-known property $PQ\in\sS_w$ for $P\in\sS_x$, $Q\in\sS_y$, and $x^{-1}+y^{-1}=w^{-1}$, that 
 \begin{equation}\label{ass3w77}
 \frac{d^j}{dz^j} \overline{M(z)}\in\sS_w\bigl(L^2(\cC)\bigr), 
 \quad w >  (n-1)/(2j+1), \; z \in \rho(H_{\delta,c}), \; j \in \bbN.
\end{equation}
One observes that
\begin{equation*}
 \frac{d}{dz} \big[\overline{M(z)}\big]^{-1} = - \big[\overline{M(z)}\big]^{-1}\left(\frac{d}{dz} \overline{M(z)}\right)\big[\overline{M(z)}\big]^{-1}, \quad z \in 
 \rho(H_{\delta,c})\cap\rho(H_{\delta,\alpha}),
\end{equation*}
that $\big[\overline{M(z)}\big]^{-1}$ is bounded, and by \eqref{ass3w77} that for $j\in\bbN$ also 
\begin{equation}\label{ass4w44}
 \frac{d^j}{dz^j} \big[\overline{M (z)}\big]^{-1}\in\sS_w\bigl(L^2(\cC)\bigr), 
 \quad w > (n-1)/(2j+1), \; z \in \rho(H_{\delta,c})\cap\rho(H_{\delta,\alpha});
\end{equation}
we leave the formal induction step to the reader. Therefore, 
\begin{equation}\label{zuju}
\begin{split}
&\bigl(  M(z)^{-1} \gamma({\ol z})^* \bigr)^{(q)}=\bigl( \big[\overline{M(z)}\big]^{-1} \gamma({\ol z})^*\bigr)^{(q)} \\
&\qquad=\sum_{\substack{p+m=q \\[0.2ex] p,m\ge0}} \begin{pmatrix} q \\ p \end{pmatrix} 
 \bigl(\big[\overline{M(z)}\big]^{-1}\bigr)^{(p)} \bigl(\gamma({\ol z})^*\bigr)^{(m)}\\
 &\qquad=\big[\overline{M(z)}\big]^{-1} \bigl(\gamma({\ol z})^*\bigr)^{(q)}+ \sum_{\substack{p+m=q \\[0.2ex] p > 0, m\geq 0}} \begin{pmatrix} q \\ p \end{pmatrix} 
 \bigl(\big[\overline{M(z)}\big]^{-1}\bigr)^{(p)} \bigl(\gamma({\ol z})^*\bigr)^{(m)}, 
 \end{split}
 \end{equation}
and one has $$\big[\overline{M(z)}\big]^{-1} (\gamma({\ol z})^*)^{(q)}\in\sS_r\big(L^2(\dR^n),L^2(\cC)\big)$$ for $r> (n-1)/[2q+(3/2)]$ by \eqref{klklkl} and each
summand (and hence also the finite sum) on the right-hand side in \eqref{zuju} is in $\sS_r(L^2(\dR^n),L^2(\cC))$ for 
$r> (n-1)/[2p+1+2m+(3/2)] = (n-1)/[2q+(5/2)]$, which follows from
\eqref{ass4w44} and \eqref{klklkl}. Hence one has 
\begin{equation}\label{hohojaa} 
 \bigl( M(z)^{-1} \gamma({\ol z})^*\bigr)^{(q)}\in \sS_r\bigl(L^2(\dR^n),L^2(\cC)\bigr)
\end{equation}
for $r > (n-1)/[2q+(3/2)]$ and $z \in \rho(H_{\delta,c})\cap\rho(H_{\delta,\alpha})$.
From \eqref{oppo} and \eqref{hohojaa} one then concludes 
\begin{align*}
\overline{\gamma(z)}^{(p)}\bigl( M(z)^{-1} \gamma({\ol z})^*\bigr)^{(q)}\in\sS_r\bigl(L^2(\dR^n)\bigr)
\end{align*}
for $r > (n-1)/[2(p+q)+3] = (n-1)/(4k+3)$,
and since $k\geq (n-3)/4$, one has $1> (n-1)/(4k+3)$, that is, the trace class condition~\eqref{ddc1} is satisfied. The same argument shows that \eqref{ddc2} is satisfied.
Finally, \eqref{ddc3} follows from \eqref{ass3w77} and the fact that $k\geq (n-3)/4$ implies
\begin{equation*}
 \frac{2k+1}{j}\geq\frac{n-1}{2j}>\frac{n-1}{2j+1},\quad j=1,\dots,2k+1.
\end{equation*} 

Hence the assumptions in Theorem~\ref{mainssf2} are satisfied with $S$ in Step 1, the quasi boundary triple 
in \eqref{qbtw1}--\eqref{qbtw2}, the corresponding $\gamma$-field, and Weyl function in \eqref{mddf}. Hence, Theorem~\ref{mainssf2} yields
assertion $(i)$ in Theorem~\ref{dddthm} with $H$ replaced by $H_{\delta,c}$. In addition, for any orthonormal basis $\{\varphi_j\}_{j \in J}$ in $L^2(\cC)$, the function 
  \begin{equation*}
   \xi_\alpha(\lambda) 
   =\sum_{j \in J} \lim_{\varepsilon\downarrow 0}\frac{1}{\pi} \bigl(\Im\bigl( \log (\cM_\alpha(\lambda+i\varepsilon)) \bigr)
   \varphi_j,\varphi_j\bigr)_{L^2(\cC)}  \, \text{ for a.e.~$\lambda \in \dR$},
  \end{equation*}
is a spectral shift function for the pair $\{H_{\delta,c},H_{\delta,\alpha}\}$ such that $\xi_\alpha(\lambda)=0$ for 
$\lambda < \inf(\sigma(H_{\delta,c}))\leq \inf(\sigma(H_{\delta,\alpha}))$ and the trace formula
\begin{equation*}
\begin{split}
& \tr_{L^2(\dR^n)}\bigl( (H_{\delta,\alpha} - z I_{L^2(\dR^n)})^{-(2k+1)} 
 - (H_{\delta,c} - z I_{L^2(\dR^n)})^{-(2k+1)}\bigr) \\
 &\quad= - (2k+1) \int_\dR \frac{\,\xi_\alpha(\lambda)\, d\lambda}{(\lambda - z)^{2k+2}}, \quad z \in \rho(H_{\delta,c})\cap\rho(H_{\delta,\alpha}), 
\end{split}
 \end{equation*}
holds.

The above considerations remain valid in the special case $\alpha=0$ which corresponds
to the pair $\{H_{\delta,c},H\}$ and yields an analogous representation for a spectral shift function $\xi_0$.
Finally it follows from the considerations in the end of Section~\ref{ssfsec} (see \eqref{ssfab}) 
that 
\begin{equation*}
\begin{split}
   \xi(\lambda)&=\xi_\alpha(\lambda)-\xi_0(\lambda)\\
   &=\sum_{j \in J} \lim_{\varepsilon\downarrow 0}\frac{1}{\pi} \Bigl(\bigl(\Im\bigl( \log (\cM_\alpha(\lambda+i\varepsilon)) - \log (\cM_0(\lambda + i \varepsilon))\bigr)\bigr)
   \varphi_j,\varphi_j\Bigr)_{L^2(\cC)} 
\end{split}
   \end{equation*}
   for a.e.~$\lambda \in \dR$
is a spectral shift function for the pair $\{H,H_{\delta,\alpha}\}$ such that $\xi(\lambda)=0$ for 
$\lambda < \inf(\sigma(H_{\delta,c}))\leq \inf\{\sigma(H),\sigma(H_{\delta,\alpha})\}$. This completes the proof of Theorem~\ref{dddthm}.
\end{proof}

In space dimensions $n=2$ and $n=3$ one can choose $k=0$ in Theorem~\ref{dddthm} and together with Corollary~\ref{mainthmcorchen} one obtains the following result.

\begin{corollary}\label{mainthmcorchen3}
Let the assumptions and $\cM_\alpha$ and $\cM_0$ be as in Theorem~\ref{dddthm}, and suppose that $n=2$ or $n=3$. 
Then the following assertions $(i)$--$(iii)$ hold: 
\begin{itemize}
  \item [$(i)$]
The difference of the resolvents of $H$ and $H_{\delta,\alpha}$ is 
a trace class operator, that is, for all $z\in\rho(H_{\delta,\alpha})=\rho(H)\cap\rho(H_{\delta,\alpha})$, 
\begin{equation*}
 \big[(H_{\delta,\alpha} - z I_{L^2(\dR^n)})^{-1}-(H - z I_{L^2(\dR^n)})^{-1}\big] \in \sS_1\bigl(L^2(\dR^n)\bigr).
\end{equation*}
  \item [$(ii)$] $\Im(\log (\cM_\alpha(z)))\in\sS_1(L^2(\cC))$ and $\Im(\log (\cM_0(z)))\in\sS_1(L^2(\cC))$ for all $z\in\dC\backslash\dR$, and the limits 
  $$\Im\bigl(\log(\cM_\alpha(\lambda+i 0))\bigr):=\lim_{\varepsilon\downarrow 0}\Im\bigl(\log(\cM_\alpha(\lambda+i\varepsilon))\bigr)$$ 
  and
  $$\Im\bigl(\log(\cM_0(\lambda+i 0))\bigr):=\lim_{\varepsilon\downarrow 0}\Im\bigl(\log(\cM_0(\lambda+i\varepsilon))\bigr)$$ 
  exist for a.e.~$\lambda \in \dR$ in $\sS_1(L^2(\cC))$. 
  \item [$(iii)$] The function
  \begin{equation*}
   \xi(\lambda)=\frac{1}{\pi} \tr_{L^2(\cC)}\bigl(\Im\big(\log(\cM_\alpha(\lambda + i0))-\log(\cM_0(\lambda + i0)) \big)\bigr) 
  \end{equation*}
  for a.e. $\lambda\in\dR$,
is a spectral shift function for the pair $\{H,H_{\delta,\alpha}\}$ such that $\xi(\lambda)=0$ for 
$\lambda < \inf(\sigma(H_{\delta,c}))$ and the trace formula
\begin{equation*}
 \tr_{L^2(\dR^n)}\bigl( (H_{\delta,\alpha} - z I_{L^2(\dR^n)})^{-1} 
 - (H - z I_{L^2(\dR^n)})^{-1}\bigr) 
 = -  \int_\dR \frac{\,\xi(\lambda)\, d\lambda}{(\lambda - z)^2}
\end{equation*}
is valid for all $z\in\rho(H_{\delta,\alpha})=\rho(H)\cap\rho(H_{\delta,\alpha})$.
\end{itemize}
\end{corollary}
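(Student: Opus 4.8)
The plan is to obtain Corollary~\ref{mainthmcorchen3} as the $k=0$ specialization of Theorem~\ref{dddthm}, with Theorem~\ref{mainssf2} replaced by the sharper Corollary~\ref{mainthmcorchen}. First note that for $n=2$ and $n=3$ one has $(n-3)/4\le 0$, so $k=0$ is an admissible choice in Theorem~\ref{dddthm}. In particular, assertion~$(i)$ of the corollary is exactly assertion~$(i)$ of Theorem~\ref{dddthm} with $k=0$, applied to the pair $\{H,H_{\delta,\alpha}\}$, and needs no further argument.

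For $(ii)$ and $(iii)$ I would revisit Steps~1--3 of the proof of Theorem~\ref{dddthm}. Steps~1 and~2 produce, verbatim, the quasi boundary triple $\{L^2(\cC),\Gamma_0,\Gamma_1\}$ with $H_{\delta,c}=T\upharpoonright\ker(\Gamma_0)$ and $H_{\delta,\alpha}=T\upharpoonright\ker(\Gamma_1)$, the Weyl function $M$ whose closure is $\cM_\alpha$ from \eqref{4.5b}, and the boundedness of $M(z_1)$, $M(z_2)^{-1}$ for $z_1,z_2\in\dC\backslash\dR$. The single extra ingredient required to invoke Corollary~\ref{mainthmcorchen} rather than Theorem~\ref{mainssf2} is the Hilbert--Schmidt property $\overline{\gamma(z_0)}\in\sS_2(L^2(\cC),L^2(\dR^n))$ for some $z_0\in\rho(H_{\delta,c})$, and this is read off from \eqref{oppo} with $p=0$: one has $\overline{\gamma(z)}\in\sS_r(L^2(\cC),L^2(\dR^n))$ for $r>2(n-1)/3$, and $2(n-1)/3$ equals $2/3$ for $n=2$ and $4/3$ for $n=3$, both strictly less than $2$. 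Hence Corollary~\ref{mainthmcorchen} applies to $\{H_{\delta,c},H_{\delta,\alpha}\}$ and yields $\Im(\log(\cM_\alpha(z)))\in\sS_1(L^2(\cC))$ for $z\in\dC\backslash\dR$, the existence of the $\sS_1$-limit $\Im(\log(\cM_\alpha(\lambda+i0)))$ for a.e.\ $\lambda\in\dR$, and the fact that $\xi_\alpha(\lambda)=\pi^{-1}\tr_{L^2(\cC)}\bigl(\Im(\log(\cM_\alpha(\lambda+i0)))\bigr)$ is a spectral shift function for $\{H_{\delta,c},H_{\delta,\alpha}\}$ vanishing below $\inf(\sigma(H_{\delta,c}))$, together with the one-power trace formula.

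Running the identical argument with $\alpha=0$ gives the analogous statements for $\{H_{\delta,c},H\}$ with Weyl function $\cM_0$ from \eqref{4.5a} and spectral shift function $\xi_0(\lambda)=\pi^{-1}\tr_{L^2(\cC)}\bigl(\Im(\log(\cM_0(\lambda+i0)))\bigr)$; this proves $(ii)$. For $(iii)$ I would then apply the comparability scheme at the end of Section~\ref{ssfsec} with $C=H_{\delta,c}$: subtracting the two one-power trace formulas on $\rho(H)\cap\rho(H_{\delta,\alpha})\cap\rho(H_{\delta,c})$ shows that $\xi=\xi_\alpha-\xi_0$ is a spectral shift function for $\{H,H_{\delta,\alpha}\}$, and by linearity of the trace this equals $\pi^{-1}\tr_{L^2(\cC)}\bigl(\Im(\log(\cM_\alpha(\lambda+i0))-\log(\cM_0(\lambda+i0)))\bigr)$, giving the displayed formula and trace formula; the bound $\xi(\lambda)=0$ for $\lambda<\inf(\sigma(H_{\delta,c}))$ is inherited from the two summands exactly as at the end of the proof of Theorem~\ref{dddthm}.

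The only point needing a little care -- the ``main obstacle,'' such as it is -- is verifying that the comparability argument at the end of Section~\ref{ssfsec}, which is phrased there via Theorem~\ref{mainssf2} for general $k$, carries over when Corollary~\ref{mainthmcorchen} is used for $k=0$. This reduces to two routine observations: the $\sS_1(L^2(\cC))$-valued boundary limits $\Im(\log\cM_\alpha(\lambda+i0))$ and $\Im(\log\cM_0(\lambda+i0))$ combine additively once both exist in $\sS_1(L^2(\cC))$, and the two single-power trace formulas subtract cleanly on the common resolvent set. Beyond this, the corollary is a direct specialization of the already-established Theorem~\ref{dddthm}.
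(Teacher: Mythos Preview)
Your proposal is correct and follows essentially the same approach as the paper, which simply remarks that for $n=2,3$ one may take $k=0$ in Theorem~\ref{dddthm} and then combine with Corollary~\ref{mainthmcorchen}. You have filled in the details the paper leaves implicit---in particular the verification of the Hilbert--Schmidt condition $\overline{\gamma(z_0)}\in\sS_2$ from \eqref{oppo}---and carried through the comparability argument with $C=H_{\delta,c}$ exactly as intended.
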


In the special case $\alpha<0$, Theorem~\ref{dddthm} simplifies slightly since in that case the sign condition \eqref{sign333} in Theorem~\ref{mainssf2}
is satisfied by the pair $\{H,H_{\delta,\alpha}\}$. Hence it is not necessary to introduce the operator $H_{\delta,c}$ as a comparison
operator in the proof of Theorem~\ref{dddthm}. Instead, one considers the operators $S$ and $T$ in Step 1 of the proof of Theorem~\ref{dddthm}, and defines the 
boundary maps by
\begin{equation*}
 \Gamma_0 f=-\gamma_N^{\rm i}f_{\rm i}-\gamma_N^{\rm e}f_{\rm e},\quad \dom(\Gamma_0)=\dom(T),
\end{equation*}
and
\begin{equation*}
 \Gamma_1 f=-\gamma_D^{\rm i} f_{\rm i}+\frac{1}{\alpha}(\gamma_N^{\rm i}f_{\rm i}+\gamma_N^{\rm e}f_{\rm e})\bigr),\quad 
 \dom(\Gamma_1)=\dom(T).
\end{equation*} 
In this case the corresponding Weyl function is given by
\begin{equation*}
 M(z)=\cE_{1/2}(z) - \alpha^{-1} I_{L^2(\cC)},\quad z\in\dC\backslash\dR,
\end{equation*}
and hence the next statement follows in the same way as Theorem~\ref{dddthm} from our abstract result Theorem~\ref{mainssf2}. 

\begin{theorem}\label{dddthm2}
Assume Hypothesis~\ref{hypo6}, 
let $\cE(z)$ be defined as in \eqref{ee}, and let $\alpha\in C^1(\cC)$ be a real-valued function such that $\alpha(x)<0$ for all $x\in\cC$. 
Then the following assertions $(i)$ and $(ii)$ hold for $k \in \bbN_0$ such that $k\geq (n-3)/4$:
 \begin{itemize}
  \item [$(i)$] The difference of the $2k+1$th-powers of the resolvents of $H$ and $H_{\delta,\alpha}$ is 
a trace class operator, that is,
\begin{equation*}
\big[(H_{\delta,\alpha} - z I_{L^2(\bbR^n)})^{-(2k+1)} 
- (H - z I_{L^2(\bbR^n)})^{-(2k+1)}\big] \in \sS_1\bigl(L^2(\dR^n)\bigr)
\end{equation*}
holds for all $z\in\rho(H_{\delta,\alpha})=\rho(H)\cap\rho(H_{\delta,\alpha})$. 
 \item [$(ii)$] For any orthonormal basis $(\varphi_j)_{j \in J}$ in $L^2(\cC)$ the function 
  \begin{equation*}
   \xi(\lambda) 
   =\sum_{j \in J} \lim_{\varepsilon\downarrow 0}\frac{1}{\pi} \bigl(\Im\bigl( \log (\cE(t+i\varepsilon) - \alpha^{-1} I_{L^2(\cC)}) \bigr)
   \varphi_j,\varphi_j\bigr)_{L^2(\cC)} 
  \end{equation*}
  for a.e. $\lambda\in\dR$, 
is a spectral shift function for the pair $\{H,H_{\delta,\alpha}\}$ such that $\xi(\lambda)=0$ for 
$\lambda < 0$ and the trace formula
\begin{align*}
 \tr_{L^2(\bbR^n)}\bigl( (H_{\delta,\alpha} & - z I_{L^2(\bbR^n)})^{-(2k+1)} 
 - (H - z I_{L^2(\bbR^n)})^{-(2k+1)}\bigr)    \\ 
 & \qquad = - (2k+1) \int_\dR \frac{\xi(\lambda)\, d\lambda}{(\lambda - z)^{2k+2}} 
 \end{align*}
is valid for all $z\in\dC\backslash [0,\infty)$.
\end{itemize}
\end{theorem}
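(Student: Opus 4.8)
The plan is to follow the blueprint of the proof of Theorem~\ref{dddthm} essentially verbatim, but with the simplification afforded by the sign condition $\alpha(x)<0$. First I would note that, since $\alpha(x)\ne 0$ for all $x\in\cC$, the closed symmetric operator $S = H\cap H_{\delta,\alpha}$ is again
\begin{equation*}
 Sf=-\Delta f,\quad \dom(S)=\bigl\{f\in H^2(\dR^n) \,\big|\, \gamma_D^{\rm i}f_{\rm i}=\gamma_D^{\rm e}f_{\rm e}=0\bigr\},
\end{equation*}
with the same operator $T=-\Delta$ on $\dom(T)=\{f\in H^2(\Omega_{\rm i})\times H^2(\Omega_{\rm e})\,|\,\gamma_D^{\rm i}f_{\rm i}=\gamma_D^{\rm e}f_{\rm e}\}$ as before. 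With the boundary maps $\Gamma_0 f = -(\gamma_N^{\rm i}f_{\rm i}+\gamma_N^{\rm e}f_{\rm e})$ and $\Gamma_1 f = -\gamma_D^{\rm i}f_{\rm i}+\alpha^{-1}(\gamma_N^{\rm i}f_{\rm i}+\gamma_N^{\rm e}f_{\rm e})$ on $\dom(T)$, one checks via Theorem~\ref{ratemal} that $\{L^2(\cC),\Gamma_0,\Gamma_1\}$ is a quasi boundary triple for $T\subset S^*$: Green's identity gives condition $(i)$ by a short computation analogous to Step~1 of the proof of Theorem~\ref{dddthm}; the surjectivity of the map in \eqref{tmap} together with the fact that $\begin{pmatrix} 0 & -I \\ -I & \alpha^{-1}I\end{pmatrix}$ (acting on $\gamma_D^{\rm i}$ and $\gamma_N^{\rm i}+\gamma_N^{\rm e}$) is an isomorphism of $L^2(\cC)\times L^2(\cC)$ gives the density part of $(ii)$, and $C_0^\infty(\Omega_{\rm i})\times C_0^\infty(\Omega_{\rm e})\subset\ker(\Gamma_0)\cap\ker(\Gamma_1)$ gives the rest; condition $(iii)$ holds because $H=T\!\upharpoonright\!\ker(\Gamma_0)$ is self-adjoint (this identification uses $\gamma_N^{\rm i}f_{\rm i}+\gamma_N^{\rm e}f_{\rm e}=0$ together with $\gamma_D^{\rm i}f_{\rm i}=\gamma_D^{\rm e}f_{\rm e}$, forcing $f\in H^2(\dR^n)$, exactly as in \eqref{fvb}). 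One then has $H_{\delta,\alpha}=T\!\upharpoonright\!\ker(\Gamma_1)$.

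Next I would compute the Weyl function. For $f_z\in\ker(T-zI)$ one has $\gamma_D^{\rm i}f_{{\rm i},z}=\gamma_D^{\rm e}f_{{\rm e},z}\in H^{3/2}(\cC)$ and, exactly as in \eqref{useit2}--\eqref{useit3}, $(\cD_{\rm i}(z)+\cD_{\rm e}(z))\gamma_D^{\rm i}f_{{\rm i},z}=\gamma_N^{\rm i}f_{{\rm i},z}+\gamma_N^{\rm e}f_{{\rm e},z}$, hence $\cE_{1/2}(z)\Gamma_0 f_z = \cE_{1/2}(z)\,(-(\gamma_N^{\rm i}f_{{\rm i},z}+\gamma_N^{\rm e}f_{{\rm e},z})) = -\gamma_D^{\rm i}f_{{\rm i},z}$ and therefore
\begin{equation*}
 M(z)\Gamma_0 f_z = \Gamma_1 f_z = -\gamma_D^{\rm i}f_{{\rm i},z} + \alpha^{-1}(\gamma_N^{\rm i}f_{{\rm i},z}+\gamma_N^{\rm e}f_{{\rm e},z}) = \bigl(\cE_{1/2}(z)-\alpha^{-1}I_{L^2(\cC)}\bigr)\Gamma_0 f_z,
\end{equation*}
so $M(z)=\cE_{1/2}(z)-\alpha^{-1}I_{L^2(\cC)}$ on $\dom(M(z))=\ran(\Gamma_0)=H^{1/2}(\cC)$, using surjectivity of \eqref{tmap} onto the Neumann component. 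Since $\cE(z)$ is compact in $L^2(\cC)$ for $z\in\dC\backslash[0,\infty)$ (by \cite[Proposition~3.2~$(iii)$]{BLL13-AHP}), the closure is $\overline{M(z)}=\cE(z)-\alpha^{-1}I_{L^2(\cC)}\in\cL(L^2(\cC))$, which is obviously bounded; its bounded invertibility for $z\in\dC\backslash\dR$ follows because $\alpha^{-1}\cE(z)$ is compact and $\ker(\cE(z)-\alpha^{-1}I_{L^2(\cC)})=\{0\}$ (if $\cE(z)\varphi=\alpha^{-1}\varphi$, write $\varphi=(\cD_{\rm i}(z)+\cD_{\rm e}(z))\psi$, construct $f_z$ as in \eqref{df1}, and the Green's-identity argument of \eqref{df2}--\eqref{fvb} forces $f_z\in\ker(H_{\delta,\alpha}-zI)=\{0\}$, hence $\varphi=0$). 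Thus $M(z_1)$ and $M(z_2)^{-1}$ are bounded for any $z_1,z_2\in\dC\backslash\dR$.

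For the $\sS_p$-conditions of Theorem~\ref{mainssf2} I would repeat Step~3 of the proof of Theorem~\ref{dddthm} with $H$ in place of $H_{\delta,c}$: the smoothing estimate $(H-zI)^{-1}:H^k\to H^{k+2}$ (globally on $\dR^n$, hence in particular on $\Omega_{\rm i}\times\Omega_{\rm e}$) together with \eqref{gstar} gives $\gamma(\bar z)^* = -\gamma_D^{\rm i}(H-zI)^{-1}$ (here the computation is even shorter, since $\Gamma_1 = -\gamma_D^{\rm i}+\alpha^{-1}(\gamma_N^{\rm i}+\gamma_N^{\rm e})$ and on $\dom(H)$ one has $\gamma_N^{\rm i}f_{\rm i}+\gamma_N^{\rm e}f_{\rm e}=0$), so Lemma~\ref{usel} yields $(\gamma(\bar z)^*)^{(q)}\in\sS_r$ for $r>(n-1)/[2q+(3/2)]$ and $\overline{\gamma(z)}^{(p)}\in\sS_r$ for $r>(n-1)/[2p+(3/2)]$; formula \eqref{gammad3} gives $\frac{d^j}{dz^j}\overline{M(z)} = j!\,\gamma(\bar z)^*(H-zI)^{-(j-1)}\overline{\gamma(z)} = -j!\,\gamma_D^{\rm i}(H-zI)^{-j}\overline{\gamma(z)}\in\sS_w$ for $w>(n-1)/(2j+1)$, and the same for $\frac{d^j}{dz^j}[\overline{M(z)}]^{-1}$ by the product rule; composing these as in \eqref{zuju}--\eqref{hohojaa} gives \eqref{ddc1}, \eqref{ddc2} for $r>(n-1)/(4k+3)$ and \eqref{ddc3}, all satisfied once $k\ge(n-3)/4$. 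The sign condition \eqref{sign333} for $\{H,H_{\delta,\alpha}\}$ follows from $\alpha<0$: the forms satisfy $\mathfrak h_{\delta,0}=\mathfrak h\le\mathfrak h_{\delta,\alpha}$, so for $\zeta<\inf(\sigma(H_{\delta,\alpha}))\le 0$ one gets $(H-\zeta I)^{-1}\ge(H_{\delta,\alpha}-\zeta I)^{-1}$; one may take $\zeta_0$ any negative number, and $\sigma(H)=\sigma_{\rm ess}(H_{\delta,\alpha})=[0,\infty)$ gives $\xi(\lambda)=0$ for $\lambda<0$. Applying Theorem~\ref{mainssf2} directly (no comparison operator $H_{\delta,c}$ is needed, since the sign condition already holds for the pair itself) then yields assertions $(i)$ and $(ii)$ with $\overline{M(\lambda+i\varepsilon)}=\cE(\lambda+i\varepsilon)-\alpha^{-1}I_{L^2(\cC)}$. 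The main obstacle, as in Theorem~\ref{dddthm}, is really just the bookkeeping of the Schatten exponents through the Leibniz expansions and the verification that the quasi boundary triple axioms hold for the new $\Gamma_0,\Gamma_1$; everything else is a transcription of the earlier argument with the sign-condition simplification. A minor point to flag: the statement writes $\cE(t+i\varepsilon)$ where it should read $\cE(\lambda+i\varepsilon)$, a typo to be corrected.
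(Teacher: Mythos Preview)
Your proposal is correct and follows essentially the same approach as the paper. The paper's own treatment of Theorem~\ref{dddthm2} is only a brief sketch preceding the statement: it specifies the same boundary maps $\Gamma_0 f=-(\gamma_N^{\rm i}f_{\rm i}+\gamma_N^{\rm e}f_{\rm e})$ and $\Gamma_1 f=-\gamma_D^{\rm i}f_{\rm i}+\alpha^{-1}(\gamma_N^{\rm i}f_{\rm i}+\gamma_N^{\rm e}f_{\rm e})$, identifies the Weyl function as $M(z)=\cE_{1/2}(z)-\alpha^{-1}I_{L^2(\cC)}$, and then simply says the result follows as in Theorem~\ref{dddthm} from Theorem~\ref{mainssf2}; you have correctly filled in those details, including the observation that $\gamma(\bar z)^*=-\gamma_D^{\rm i}(H-zI)^{-1}$ simplifies since $\gamma_N^{\rm i}+\gamma_N^{\rm e}$ vanishes on $\dom(H)$, and the typo $\cE(t+i\varepsilon)$ for $\cE(\lambda+i\varepsilon)$.
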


The analog of Corollary~\ref{mainthmcorchen3} again holds in the special cases $n=2$ and $n=3$; we omit further details.

\vskip 0.8cm
\noindent {\bf Acknowledgments.}  
J.B.\ is most grateful for the stimulating research stay and the hospitality at the 
Graduate School of Mathematical Sciences of the University of Tokyo from April to July 2016, where parts of this paper were written. F.G.\ is indebted to all organizers of the IWOTA 2017 Conference for creating such a stimulating atmosphere and for the great hospitality in Chemnitz, Germany, August 14--18. 
The authors also wish to thank Hagen Neidhardt for fruitful discussions and helpful remarks. This work is supported by International Relations and Mobility Programs of the TU Graz and the Austrian Science Fund (FWF), project P~25162-N26.



\begin{thebibliography}{99}

\bibitem{AS72} 
M.~Abramowitz and I.~A.~Stegun, {\it Handbook of Mathematical
Functions}, Dover, New York, 1972.

\bibitem{AGHH05}
S.~Albeverio, F.~Gesztesy, R.~H{\o}egh-Krohn, and H.~Holden, 
{\it Solvable Models in Quantum Mechanics}, 2nd edition. With an appendix by Pavel Exner.
AMS Chelsea Publishing, Providence, RI, 2005.

\bibitem{AKMN13}
S.~Albeverio, A.~Kostenko, M.\,M.~Malamud, and H.~Neidhardt,
Spherical Schr\"odinger operators with $\delta$-type interactions, 
J. Math. Phys. {\bf 54} (2013), 052103. 

\bibitem{AK99}
S.~Albeverio and P.~Kurasov, 
{\it Singular Perturbations of Differential Operators}, 
London Mathematical Society Lecture Note Series, Vol.\ {\bf 271}, Cambridge University Press, Cambridge, 2000.

\bibitem{AGS87} J.-P.~Antoine, F.~Gesztesy, and J.~Shabani,
Exactly solvable models of sphere interactions in quantum mechanics,
J.\ Phys.\ A {\bf 20} (1987), 3687--3712.

\bibitem{BGN17} J.~Behrndt, F. Gesztesy, and S.~Nakamura, 
Spectral shift functions and Dirichlet-to-Neumann maps, to appear in Math. Ann.

\bibitem{BL07} J.~Behrndt and M.~Langer, 
Boundary value problems for elliptic partial differential operators on bounded domains, 
J. Funct. Anal. {\bf 243} (2007), 536--565.

\bibitem{BL12} J.~Behrndt and M.~Langer,
Elliptic operators, Dirichlet-to-Neumann maps and quasi boundary triples,
in: \textit{Operator Methods for Boundary Value Problems},
London Math.\ Soc.\ Lecture Note Series, Vol.\ {\bf 404}, 2012, pp.~121--160.

\bibitem{BLL13-AHP} J.~Behrndt, M.~Langer, and V.~Lotoreichik, 
Schr\"odinger operators with $\delta$ and $\delta'$-potentials supported on hypersurfaces, 
Ann.\ Henri Poincar\'e {\bf 14} (2013), 385--423.

\bibitem{BLL13-IEOT} J.~Behrndt, M.~Langer, and V.~Lotoreichik,
Spectral estimates for resolvent differences of self-adjoint elliptic operators, Integral Equations Operator Theory {\bf 77} (2013), 1--37.

\bibitem{BLL13-3} J.~Behrndt, M.~Langer, and V.~Lotoreichik, 
Trace formulae and singular values of resolvent power differences of self-adjoint elliptic operators,
J. London Math. Soc. {\bf 88} (2013), 319--337.

\bibitem{BLL-Exner} J.~Behrndt, M.~Langer, and V.~Lotoreichik, 
Trace formulae for Schr\"{o}dinger operators with singular interactions,
in {\it Functional Analysis and Operator Theory for Quantum Physics}, 
J.\ Dittrich, H.\ Kovarik, and A.\ Laptev (eds.), EMS Publishing House, EMS, 
ETH--Z\"urich, Switzerland (to appear). 

\bibitem{BMN17} J.~Behrndt, M.\,M.~Malamud, and H.~Neidhardt, Scattering matrices and Dirichlet-to-Neumann maps, J. Funct. Anal. {\bf 273} (2017), 1970--2025. 

\bibitem{BP98}
M.\,Sh.~Birman and A.\,B.~Pushnitski,  
Spectral shift function, amazing and multifaceted, 
Integral Equations Operator Theory {\bf 30} (1998), 191--199.

\bibitem{BS87} 
M.\,Sh.~Birman and M.\,Z.~Solomjak,
\textit{Spectral Theory of Self-Adjoint Operators in Hilbert Spaces},
D.\ Reidel Publishing Co., Dordrecht, 1987.

\bibitem{BY92}
M.\,Sh.~Birman and D.\,R.~Yafaev, 
The spectral shift function. The papers of M. G. Krein and their further development, 
Algebra i Analiz {\bf 4} (1992), no. 5 1--44; translation in St. Petersburg Math. J. 
{\bf 4} (1993), no. 5, 833--870. 

\bibitem{BY92-1}
M.\,Sh.~Birman and D.\,R.~Yafaev, 
Spectral properties of the scattering matrix, 
Algebra i Analiz {\bf 4} (1992), no. 6 1--27; translation in St. Petersburg 
Math. J. {\bf 4} (1993), no. 6, 1055--1079.

\bibitem{BEKS94}
J.\,F.~Brasche, P.~Exner, Yu.\,A.~Kuperin, and P.~Seba,
Schr\"odinger operators with singular interactions,
J. Math. Anal. Appl. {\bf 184} (1994), 112--139.

\bibitem{BGP08}  J.~Br\"uning, V.~Geyler, and K.~Pankrashkin, 
Spectra of self-adjoint extensions and applications to solvable Schr\"odinger operators, 
Rev. Math. Phys. {\bf 20} (2008), 1--70.

\bibitem{DM91} V.\,A.~Derkach and M.\,M.~Malamud,
Generalized resolvents and the boundary value problems for Hermitian operators
with gaps, J.\ Funct.\ Anal. {\bf 95} (1991), 1--95.

\bibitem{DM95} V.\,A.~Derkach and M.\,M.~Malamud,
The extension theory of Hermitian operators and the moment problem,
J.\ Math.\ Sci.\ (NY) {\bf 73} (1995), 141--242.

\bibitem{E08} P.~Exner,
Leaky quantum graphs: a review, Proc.\ Symp.\ Pure Math. {\bf 77} (2008), 523--564.

\bibitem{EI01}
P.~Exner and T.~Ichinose,
Geometrically induced spectrum in curved leaky wires,
J.\ Phys.\ A {\bf 34} (2001), 1439--1450.

\bibitem{EK03} P.~Exner and S.~Kondej,
Bound states due to a strong $\delta$ interaction supported by a curved surface,
J.\ Phys.\ A {\bf 36} (2003), 443--457.

\bibitem{EK05}
P.~Exner and S.~Kondej,
Scattering by local deformations of a straight leaky wire,
J.\ Phys.\ A {\bf 38} (2005) 4865--4874.

\bibitem{EK15}
P.~Exner and H.~Kova\v{r}\'{\i}k, {\it Quantum Waveguides}, Cham, Springer, 2015.

\bibitem{EY02}
P.~Exner and K.~Yoshitomi,
Asymptotics of eigenvalues of the Schr\"odinger operator with a strong
$\delta$-interaction on a loop,
J.\ Geom.\ Phys. {\bf 41} (2002), 344--358.

\bibitem{GMN99}
F.~Gesztesy, K.\,A.~Makarov, and S.\,N.~Naboko, 
The spectral shift operator, 
Operator Theory Advances Applications {\bf 108} (1999), 59--90.

\bibitem{GG91} V.\,I.~Gorbachuk and M.\,L.~Gorbachuk,
{\it Boundary Value Problems for Operator Differential Equations},
Kluwer Academic Publishers, Dordrecht, 1991.
 
\bibitem{K80} T.~Kato, {\it Perturbation Theory for Linear Operators}, 
Grundlehren der mathematischen Wissenschaften, Vol.\ {\bf 132}, 
corr.\ printing of the 2nd ed., Springer, Berlin, 1980.

\bibitem{K53} M.\,G.~Krein,
On the trace formula in perturbation theory, 
Mat. Sbornik {\bf 33} (1953), 597--626.

\bibitem{K62} M.\,G.~Krein,
On perturbation determinants and a trace formula for unitary and self-adjoint operators, 
Dokl. Akad. Nauk SSSR {\bf 144} (1962), 268--271.

\bibitem{L52} I.\,M.~Lifshitz, 
On a problem of the theory of perturbations connected with quantum statistics, 
Uspehi Matem. Nauk {\bf 7} (1952), 171--180.

\bibitem{L56} I.\,M.~Lifshits, Some problems of the dynamic
theory of nonideal crystal lattices, Nuovo Cimento Suppl.
{\bf 3} (Ser.~X) (1956), 716--734.

\bibitem{MPS15} 
A.~Mantile, A.~Posilicano, and M.~Sini,  
Self-adjoint elliptic operators with boundary conditions on not closed hypersurfaces, 
J. Diff. Eq. {\bf 261} (2016), 1--55.

\bibitem{McL00} 
W.~McLean,
\textit{Strongly Elliptic Systems and Boundary Integral Equations}, 
Cambridge University Press, 2000.

\bibitem{S12}
K.~Schm\"{u}dgen,
{\it Unbounded Self-Adjoint Operators on Hilbert Space},
Springer, Dordrecht, 2012.

\bibitem{Y92} D.\,R.~Yafaev,
{\it Mathematical Scattering Theory. General Theory},
Translations of Mathematical Monographs, Vol.\ {\bf 105}. Amer. Math. Soc., 
Providence, RI, 1992.

\bibitem{Y05}
D.\,R.~Yafaev, 
A trace formula for the Dirac operator, 
Bull. London Math. Soc. {\bf 37} (2005), 908--918.

\bibitem{Y10} D.\,R.~Yafaev,
{\it Mathematical Scattering Theory. Analytic Theory}, 
Mathematical Surveys and Monographs, Vol.\ {\bf 158}, Amer. Math. Soc., 
Providence, RI, 2010.

\end{thebibliography}
\end{document}